\newcommand{\comment}[1]{}
\newif\ifpdf
\newtheorem{theorem}{Theorem}[section]
\newtheorem{cor}[theorem]{Corollary}
\newtheorem{definition}[theorem]{Definition}
\newtheorem{lem}[theorem]{Lemma}
\newtheorem{obs}[theorem]{Observation}
\newtheorem{notation}[theorem]{Notation}
\newenvironment{proof}[1][Proof]{\textbf{#1.} }{\ \rule{0.5em}{0.5em}}
\newcommand{\ZZ}{\mathbb{Z}}
\title{On the Directed Oberwolfach Problem \\ with variable cycle lengths}
\author{Elaheh Shabani
\footnote{Department of Mathematics, Shahrood University of Technology, Shahrood, Iran}
\\ {\small Shahrood University of Technology} \\ \\
Mateja \v{S}ajna \\
{\small University of Ottawa}}
\begin{document}

\maketitle

\begin{abstract}
The Directed Oberwolfach Problem can be considered as the directed version of the well-known Oberwolfach Problem, first mentioned by Ringel at a conference in Oberwolfach, Germany in 1967. In this paper, we describe some new partial results on the Directed Oberwolfach Problem with variable cycle lengths. In particular, we show that the complete symmetric digraph $K_n^{*}$ admits a $(\vec{C}_2,...,\vec{C}_2, \vec{C}_3)$-factorization for all $n \equiv 1, 3,$ or $7 \pmod {8}$. We also show that $K^{*}_{n}$ admits a $(\vec{C}_2, \vec{C}_{n-2})$-factorization for any integer $n \geq 5$.

\end{abstract}

{\bf Keywords:} Directed Oberwolfach Problem; complete symmetric digraph; directed 2-factorization.

\section{Introduction}

The Directed Oberwolfach Problem is the directed version of the well-known Oberwolfach Problem, first mentioned by Ringel at a conference in Oberwolfach, Germany, in 1967. At a conference in Oberwolfach, assume $n$ participants are to be seated around circular tables of specified sizes for $\frac{n-1}{2}$ consecutive nights, where the total number of seats is equal
to $n$ and $n$ is odd. The Oberwolfach Problem asks whether it is possible that each participant sits next to each other participant exactly once. When tables are of sizes $m_1, ..., m_t$, the problem is denoted by $OP(m_1, ..., m_t)$. For $n$ even, the analogous problem is called the Spouse-Avoiding Variant, and is described as follows. Assume $n=2k$ participants, consisting of $k$ couples, are to be seated around $t$ tables of sizes $m_1, ..., m_t$ for $k-1$ consecutive nights, where $m_1+ ...+ m_t=n$. The Spouse-Avoiding Variant asks whether it is possible that each participant sits next to each other participant exactly once, except they never sit next to their spouse.

The original Oberwolfach Problem can be modeled as a decomposition of the complete graph into isomorphic 2-factors. Although many cases of this problem have been solved since 1967, the problem in general is still open.

The Oberwolfach Problem with uniform cycle lengths has been solved completely \cite{AH, ASSW, HS,RW}. In these papers it was shown that $OP(m, m, ..., m)$ has a solution for each $n$ and $m$ such that $m$ divides $n$, except when $n\in \{6, 12\}$ and $m=3$. The Oberwolfach Problem has been solved partially when the cycles have variable lengths. Bryant and Danziger \cite{BD} have shown that $OP(m_1, m_2, ..., m_t)$ has a solution for all $n$ and $m_1, m_2, ..., m_t$ all even. Traetta \cite{TT} has proved that the Oberwolfach Problem for two tables has a solution except for the case with two tables of size $3$, and the case with a table of size $4$ and a table of size $5$. Other authors \cite{ad, DFW} have shown that $OP(m_1, m_2, ..., m_t)$ has a solution for all $n\leq 40$ except for $OP(3, 3)$, $OP(3, 3, 3, 3)$, $OP(4, 5)$, and $OP(3, 3, 5)$ which are the only known exceptions. Recently, Glock et al \cite{GJ} proved that the Oberwolfach Problem has a solution for all large $n$.

The directed version would then be asking whether it is possible that each participant sits to \textit{the right} of each other participant exactly once. The Directed Oberwolfach Problem when tables are of sizes $m_1, ..., m_t$ and $m_1+ ...+m_t=n$ is denoted by $OP^{*}(m_1, ..., m_t)$.
The Directed Oberwolfach Problem has been solved in the case of cycles of length $3$ by Bermond, Germa, and Sotteau \cite{BGDR}, and in the case of cycles of length $4$ by Bennett and Zhang \cite{BZ}, except for one missing case $(n = 12)$ filled in by Adams and Bryant \cite{AB}.

The following theorems summarize all previous results on the Directed Oberwolfach Problem.
\begin{theorem}\cite{BGDR}
$OP^{*}(3, 3, ..., 3)$ has a solution if and only if $3$ divides $n$ and $n \neq6$.
\end{theorem}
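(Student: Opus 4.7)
My plan is to observe that a solution to $OP^*(3,3,\ldots,3)$ is precisely a resolvable Mendelsohn triple system of order $n$ (henceforth RMTS$(n)$): a partition of the arc set of $K_n^*$ into directed 3-cycles, grouped into parallel classes each of which spans the vertex set. Establishing the equivalence lets me treat necessity and sufficiency as statements about such designs.

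\textbf{Necessity.} Each directed 2-factor partitions the $n$ vertices into directed 3-cycles, so $3\mid n$. Discarding the resolution yields a Mendelsohn triple system MTS$(n)$, and it is a classical result of Mendelsohn that no MTS$(6)$ exists; hence $n\neq 6$.

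\textbf{Sufficiency, case $n\equiv 3\pmod 6$.} By the Ray-Chaudhuri--Wilson resolution of Kirkman's schoolgirl problem, a Kirkman triple system KTS$(n)$ exists. Its $(n-1)/2$ parallel classes each partition $V$ into $n/3$ triangles. For each parallel class I would orient every triangle cyclically in one of the two rotational directions to obtain a directed 2-factor of $K_n^*$, and then reorient the same triangles oppositely to obtain a second directed 2-factor. Because each edge of $K_n$ lies in exactly one triangle of the KTS, the resulting $n-1$ directed 2-factors together use each arc of $K_n^*$ exactly once.

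\textbf{Sufficiency, case $n\equiv 0\pmod 6$ and $n\geq 12$.} This is the main obstacle, since no KTS$(n)$ exists. I would attempt a group-theoretic construction, taking the vertex set to be an abelian group $G$ of order $n$ (for instance $\ZZ_{n/3}\times\ZZ_3$) and choosing a family of base directed 3-cycles whose translates cover every non-identity arc of the Cayley digraph on $G$ exactly once, with the orbits under a suitable subgroup of order $n/3$ producing the parallel classes. Verification then reduces to a difference condition on the base triples together with a disjointness check modulo the chosen subgroup. The smallest such orders, notably $n=12$ and possibly $n=18$, may resist a uniform scheme and would be handled by explicit ad hoc constructions before the general pattern stabilizes.
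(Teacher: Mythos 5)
The paper does not prove this statement; it is quoted as a known result of Bermond, Germa, and Sotteau \cite{BGDR}, so there is no internal proof to compare against. Judged on its own, your proposal is sound in its framing (the problem is exactly the existence of a resolvable Mendelsohn triple system of order $n$), your necessity argument is correct ($3 \mid n$ is immediate, and the classical non-existence of a Mendelsohn triple system of order $6$ rules out $n=6$), and your treatment of $n \equiv 3 \pmod 6$ via Kirkman triple systems with both cyclic orientations of each parallel class is complete and is the same doubling idea the paper records as Observation 2.5.

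The genuine gap is the case $n \equiv 0 \pmod 6$, $n \ge 12$, which is precisely where the content of the theorem lies: here $K_n$ has no Kirkman triple system (indeed no Steiner triple system, since $n \equiv 0 \pmod 6$ fails the congruence $n \equiv 1,3 \pmod 6$), so the resolvable Mendelsohn system must be built directly, and the directed 3-cycles cannot come in orientation-reversed pairs arising from an undirected decomposition. Your paragraph for this case is a research plan, not a proof: no base triples are exhibited, the ``difference condition'' that would need to be verified is not stated, and you concede that $n=12$ and $n=18$ ``may resist a uniform scheme.'' Note that $n=12$ is exactly the order that required a separate ad hoc treatment even in the analogous length-$4$ problem (Adams--Bryant), so it cannot be waved away. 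Until explicit base cycles and a verified covering argument are supplied for all $n \equiv 0 \pmod 6$, $n \ge 12$, the sufficiency direction is unproved for half of the admissible orders.
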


\begin{theorem}\cite{AB, BZ}
$OP^{*}(4, 4, ..., 4)$ has a solution if and only if $4$ divides $n$ and $n \neq 4$.
\end{theorem}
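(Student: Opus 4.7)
The plan splits into necessity and sufficiency.

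\textbf{Necessity.} The digraph $K_n^*$ has $n(n-1)$ arcs and is $(n-1)$-regular in both in- and out-degree, so any directed 2-factorization consists of exactly $n-1$ factors. A directed 2-factor built from $\vec{C}_4$'s uses $n$ arcs partitioned into $n/4$ cycles, whence $4 \mid n$. For $n=4$, such a factor is a single directed Hamilton 4-cycle, and a factorization would yield three arc-disjoint directed Hamilton cycles in $K_4^*$; a direct case-check (or an appeal to Tillson's theorem on directed Hamilton decompositions of $K_n^*$) rules this out.

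\textbf{Sufficiency.} For $n = 4k \geq 8$, my plan is a $1$-rotational starter construction on the vertex set $V = \mathbb{Z}_{n-1} \cup \{\infty\}$. I would build a single directed 2-factor $F_0$ consisting of one cycle $\vec{C}_4$ through $\infty$ together with $k-1$ cycles $\vec{C}_4$ with all vertices in $\mathbb{Z}_{n-1}$, and then develop the orbit $\{F_0 + i : i \in \mathbb{Z}_{n-1}\}$ under the shift action, where $\infty + i = \infty$. For the orbit to be a decomposition of $K_n^*$, one needs (i) that among the arcs of $F_0$ with both endpoints in $\mathbb{Z}_{n-1}$, each nonzero difference $d \in \mathbb{Z}_{n-1}$ appears exactly once, and (ii) that the two arcs of $F_0$ incident with $\infty$ use endpoints in distinct $\mathbb{Z}_{n-1}$-orbits so that under the shifts they sweep out all $2(n-1)$ mixed arcs between $\infty$ and $\mathbb{Z}_{n-1}$. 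I would construct $F_0$ by partitioning the nonzero differences into quadruples whose signed sum is $0 \pmod{n-1}$ (so that each quadruple closes up into a directed $4$-cycle), setting aside one distinguished cycle to absorb $\infty$ together with a compatible pair of mixed arcs.

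The central obstacle is twofold. First, when $n$ is small, and in particular at $n = 12$, the difference set in $\mathbb{Z}_{n-1}$ is too cramped for a tidy quadruple-partition to close up and simultaneously accommodate the $\infty$-cycle; here one falls back on an explicit ad hoc construction, which is precisely the residual case that Adams and Bryant \cite{AB} settled after Bennett and Zhang \cite{BZ}. Second, for general $n$ one typically needs separate starters for different residue classes of $n$ modulo a small integer, and in each class one must verify conditions (i) and (ii) simultaneously; this combinatorial bookkeeping, together with the verification that no two shifts of $F_0$ share an arc, is the delicate step. Combining the infinite families produced by the cyclic starters with direct constructions for the finitely many small exceptions yields the theorem.
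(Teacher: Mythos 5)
This theorem is quoted from the literature (\cite{AB, BZ}); the paper offers no proof of it, so there is nothing internal to compare against, and your proposal must stand on its own. Your necessity argument is fine: the divisibility condition $4\mid n$ is immediate from the cycle structure of a $(\vec{C}_4,\ldots,\vec{C}_4)$-factor, and for $n=4$ the required factorization would be a decomposition of $K_4^*$ into three arc-disjoint directed Hamilton $4$-cycles, which fails by a short case check (or Tillson's theorem).

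The sufficiency direction is where the entire content of the theorem lies, and your proposal does not supply it. You describe the shape of a $1$-rotational starter $F_0$ on $\mathbb{Z}_{n-1}\cup\{\infty\}$ and correctly identify the difference conditions needed for the $n-1$ translates to be arc-disjoint, but you never exhibit the partition of the $n-2$ nonzero differences into quadruples summing to $0\pmod{n-1}$ (plus the pair absorbed by the $\infty$-cycle), and you do not address the further constraint that the resulting $4$-cycles must be \emph{vertex}-disjoint and spanning: the difference conditions only control arcs, and placing $k$ pairwise disjoint $4$-cycles that realize prescribed difference quadruples is precisely the hard part of any starter construction in this area (compare the careful coverage bookkeeping via $\mathcal{X}(\cdot)$ and $\mathcal{Y}(\cdot)$ in the proof of Theorem 3.10 of this paper). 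Your condition (ii) is also vacuous rather than something to verify: $\mathbb{Z}_{n-1}$ is a single orbit under the shift, so any one in-arc and one out-arc at $\infty$ automatically sweep out all $2(n-1)$ mixed arcs. You acknowledge the gaps yourself, deferring them to ``combinatorial bookkeeping'' and ad hoc small cases, but that bookkeeping \emph{is} the theorem; it is not even established that a single cyclic starter exists for every $n\equiv 0\pmod 4$, $n\ge 8$, and the published proofs (Bennett--Zhang via resolvable Mendelsohn designs and recursive constructions, with Adams--Bryant supplying $n=12$) are substantially more elaborate than one orbit of one starter. As it stands the proposal is a plausible plan, not a proof.
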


\begin{theorem}\cite{BM} Let $m$ and $n$ be integers with $5 \leq m \leq n$. Then the following hold.

1. Let $m$ be even, or $m$ and $n$ be both odd. Then $OP^{*}(m, m, ..., m)$ has a solution if and only if $m$ divides $n$ and $(m, n) \neq(6, 6)$.

2. If $OP^{*}(m, m)$ has a solution, then $OP^{*}(m, m, ..., m)$ has a solution whenever $n \equiv 0 \pmod {2m}$.
\end{theorem}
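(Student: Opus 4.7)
The necessary conditions in Part 1 are immediate: $m \mid n$ by counting vertices in a $\vec{C}_m$-factor, and the exclusion of $(m,n)=(6,6)$ is Tillson's classical theorem that $K_6^{*}$ admits no directed Hamilton cycle decomposition. The interesting direction is sufficiency, and I would split it along the parity of $m$.

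When both $m$ and $n$ are odd, I would leverage the (already settled) undirected Oberwolfach problem. Each edge of the underlying $K_n$ contributes two opposite arcs to $K_n^{*}$, and an undirected $m$-cycle admits two orientations as a $\vec{C}_m$. Starting from a solution of the undirected $OP(m,\ldots,m)$, which by the Alspach--Haggkvist--Schellenberg--Rodger--Wilson theorem exists without exception once $m \geq 5$, I would orient every cycle of a given undirected $2$-factor coherently to obtain one directed $\vec{C}_m$-factor, and reverse all orientations to obtain a second; together this pair uses both arcs of every edge of the undirected factor. Doubling each of the $(n-1)/2$ undirected $2$-factors yields the required $n-1$ directed $\vec{C}_m$-factors of $K_n^{*}$.

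When $m$ is even, $m \mid n$ forces $n$ even, so I would split on the parity of $k := n/m$. If $k$ is even, then $n \equiv 0 \pmod{2m}$ and Part 2 reduces everything to the base case $OP^{*}(m,m)$. If $k$ is odd, I would build the factorization recursively, starting from Tillson's directed Hamilton decomposition of $K_m^{*}$ (valid since $m$ is even and $m \geq 8$; the case $m=6$ must be treated separately) and gluing in the extra $(k-1)m$ vertices via a wreath-product-like construction that preserves the $\vec{C}_m$-factor structure.

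For Part 2, write $n=2mk$, identify $V(K_n^{*})$ with $\ZZ_k \times \ZZ_{2m}$, and call $B_i = \{i\}\times \ZZ_{2m}$ the $i$th block. Running the assumed $OP^{*}(m,m)$-factorization inside each block in parallel uses every intra-block arc and produces $2m-1$ "block-diagonal" $\vec{C}_m$-factors of $K_n^{*}$. The inter-block arcs then form a complete $k$-partite tournament with parts of size $2m$, which must be decomposed into a further $2m(k-1)$ directed $\vec{C}_m$-factors spanning all $n$ vertices; this is the main obstacle. My plan is to partition those arcs by the nonzero difference $d \in \ZZ_k$ of the endpoint blocks, and for each $d$ to fix a starter on $\ZZ_{2m}$ whose $\ZZ_k$-translates (composed with $d$-jumps between blocks) produce $2m$ $\vec{C}_m$-factors that thread through all the blocks. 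The technical heart of the argument is verifying that the starter can be chosen so that, summed over all differences $d$, every inter-block arc is covered exactly once and no cycle length other than $m$ appears.
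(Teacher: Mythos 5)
This statement is quoted from \cite{BM} and the paper under review offers no proof of it, so there is nothing internal to compare against; I am assessing your plan on its own terms. The one part that is solid is the case $m,n$ both odd: doubling each undirected $2$-factor of a solution of $OP(m,\ldots,m)$ by taking both coherent orientations is exactly the paper's Observation \ref{obs}, and it does settle that case. The necessity of $m\mid n$ and the exclusion of $(6,6)$ via Tillson are also fine.

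The even-$m$ half of Part 1, however, is not an argument yet, and Part 2 has a structural flaw. For Part 1 with $m$ even: reducing the case $k=n/m$ even to Part 2 is circular, because the required input $OP^{*}(m,m)$ is itself the instance $n=2m$ of Part 1, and the doubling trick is unavailable there --- for $n=2m$ even, the undirected $OP(m,m)$ decomposes $K_{2m}-I$, not $K_{2m}$, so the arcs lying over the deleted $1$-factor $I$ are never covered. Constructing $OP^{*}(m,m)$ for even $m$, and handling $k$ odd, is the actual content of the theorem, and ``gluing in the extra $(k-1)m$ vertices via a wreath-product-like construction'' does not supply it. For Part 2, which only matters when $m$ is odd (even $m$ is already covered by Part 1), your plan of exhausting all intra-block arcs with $2m-1$ block-diagonal factors and then decomposing the purely inter-block arcs into further $\vec{C}_m$-factors provably fails at $k=2$, i.e.\ $n=4m$: the inter-block arcs form the complete bipartite digraph between the two blocks of size $2m$, whose underlying graph is bipartite, so every directed cycle contained in it has even length and no $\vec{C}_m$-factor with $m$ odd can be built from inter-block arcs alone. (The arc count $2m(k-1)$ is right, but the extra factors must mix intra- and inter-block arcs, which is precisely where the hypothesis that $OP^{*}(m,m)$ has a solution is actually used in \cite{BM}.) As written, the ``starter'' step you defer is not merely technical; for $k=2$ it is impossible in the form you describe.
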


\begin{theorem}\cite{BFN}
Let $m$ be an odd integer, $5 \leq m \leq 49$. Then $OP^{*}(m, m, ..., m)$ has a solution whenever $n \equiv 0 \pmod {2m}$.
\end{theorem}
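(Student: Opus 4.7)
The plan is to exploit part (2) of Theorem 1.3, which asserts that a solution of $OP^*(m,m)$ propagates to a solution of $OP^*(m,m,\ldots,m)$ for every $n\equiv 0\pmod{2m}$. This immediately reduces Theorem 1.4 to the finite verification task of producing, for each odd integer $m$ with $5\le m\le 49$, a decomposition of the complete symmetric digraph $K_{2m}^*$ into $2m-1$ directed 2-factors, each consisting of two vertex-disjoint directed $m$-cycles. Once such a decomposition exists for each such $m$, Theorem 1.3 closes the proof with no further work.

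For each $m$ in the target range I would search for a \emph{starter} on the vertex set $\ZZ_{2m-1}\cup\{\infty\}$: a single directed 2-factor $F$ consisting of an $m$-cycle $C$ lying entirely in $\ZZ_{2m-1}$ together with an $m$-cycle $C'$ through $\infty$, such that the $2m-1$ cyclic translates $F+i$ (with $\infty+i:=\infty$) partition the arc set of $K_{2m}^*$. Writing $C'$ with $\infty$ removed as a directed path $P$ of length $m-2$ in $\ZZ_{2m-1}$, the standard difference argument shows that the translates of $F$ form a factorization precisely when the multiset of arc-differences contributed by $C$ and $P$ is exactly $\ZZ_{2m-1}\setminus\{0\}$, with each nonzero residue occurring once. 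The $2m-2$ arcs incident with $\infty$ in $K_{2m}^*$ are then automatically covered, once each, by the translates of the unique out-arc and the unique in-arc of $\infty$ in $C'$. This is a directed analogue of a Heffter-type difference system.

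The main obstacle is the absence of a single uniform construction: for different odd $m\in[5,49]$ the choice of $(C,P)$ may have to change, and the space of candidate starters grows rapidly with $m$. I would attack this in two phases. First, I would resolve the smallest cases $m\in\{5,7,9,11\}$ either by hand or by a short exhaustive enumeration of starters, since here the search space is small and explicit examples are easy to verify. Second, for larger $m$ I would look for patterned families---for example, taking $C$ to be an arithmetic progression modulo $2m-1$ and $P$ to be a piecewise-arithmetic path engineered to absorb precisely the complementary differences, in the spirit of Skolem- or Langford-type constructions. For any residual values of $m$ not covered by such families, I would fall back on a targeted computer search, which is feasible since $2m-1\le 97$ throughout. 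The bound $m\le 49$ in the statement strongly suggests that precisely this combination of structured constructions and finite computation was the path taken in \cite{BFN}.
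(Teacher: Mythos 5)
First, a point of context: this theorem is imported verbatim from \cite{BFN} and the present paper offers no proof of it, so there is no internal argument to measure you against; what can be assessed is whether your plan would actually establish the statement. Your reduction is the right one and matches how the result is structured in the literature: by Theorem 1.3(2), everything hinges on producing a solution of $OP^{*}(m,m)$, i.e.\ a decomposition of $K_{2m}^{*}$ into $2m-1$ directed $2$-factors each consisting of two disjoint directed $m$-cycles, for each odd $m$ with $5\leq m\leq 49$. Your $1$-rotational framework on $\ZZ_{2m-1}\cup\{\infty\}$ is also sound, and the counting checks out: the starter contributes $m$ arcs from $C$ and $m-2$ arcs from the path $P$, which is exactly $|\ZZ_{2m-1}\setminus\{0\}|$ differences, and the two arcs of $C'$ at $\infty$ sweep out all $2(2m-1)$ arcs incident with $\infty$ under the rotation. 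This is the same style of difference construction the present paper uses in its own lemmas (e.g.\ Lemmas 3.1 and 3.2 and Theorem 3.4).

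The genuine gap is that the proposal stops exactly where the mathematical content of the theorem begins. For each of the $23$ odd values $m\in\{5,7,\ldots,49\}$ you must actually exhibit a starter pair $(C,P)$ whose difference multiset is $\ZZ_{2m-1}\setminus\{0\}$ with each residue occurring once, and verify it; ``resolve the small cases by hand,'' ``look for patterned families,'' and ``fall back on a targeted computer search'' are descriptions of work, not the work itself. As written, nothing in the proposal certifies that such a starter exists for even a single value of $m$ --- and existence is not automatic from the counting argument, since the counting only shows the numbers are consistent, not that a valid assignment of differences to a pair consisting of a closed $m$-walk and an $(m-2)$-path (both of which must also be vertex-disjoint and individually be genuine cycles/paths) can be realized. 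Until the explicit starters (or a uniform construction covering all $23$ cases, with a correctness proof) are supplied, the proposal is a plausible research programme rather than a proof.
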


 In this paper, we will describe some new partial results on the Directed Oberwolfach Problem with variable cycle lengths. 
The paper is organized as follows. In Section 2, we introduce the terminology and
present some basic observations on the problem. In Section 3, we prove that $OP^{*}(4, 5)$ and $OP^{*}(3, 3, 5)$ have a solution. We also show that $OP^{*}(2, n-2)$ has a solution for all $n\geq 5$. Consequently, for $2\leq m \leq n-2$ and $n$ odd, $OP^{*}(m, n-m)$ has a solution if and only if $(m, n)\neq (3, 6)$. Finally, we prove that $OP^{*}(2, 2, ..., 2, 3)$ has a solution whenever $n \equiv 1, 3,$ or $7 \pmod {8}$.
\section{Prerequisites}

As usual, the symbol $K_n^\ast$ denotes the complete symmetric digraph with $n$ vertices.
A \textit{decomposition} of a graph $G$ is a set $\mathcal{D}=\{G_1, G_2,...,G_t\}$ of subgraphs of $G$ such that $E(G_1)\cup E(G_2)\cup...\cup E(G_t)=E(G)$ and $E(G_i)\cap E(G_j)=\emptyset$ for $i\neq j$. In the case where the subgraphs are cycles we have a \textit{cycle decomposition}. 

In this paper, we consider the problem of decomposing the complete symmetric digraph $K^{*}_{n}$ into directed 2-factors, which we define as follows. Let $D$ be a digraph with $n$ vertices. For $2 \le m \le n$, a directed cycle of length $m$ in $D$ is denoted by $\vec{C}_m$. A {\em directed 2-factor} of $D$ is a spanning subdigraph of $D$ that is a disjoint union of directed cycles. For integers $m_1,\ldots,m_t$ such that $2 \le m_1 \le \ldots \le m_t$ and $m_1+\ldots+m_t=n$, a {\em $(\vec{C}_{m_1},\ldots,\vec{C}_{m_t})$-factor} of $D$ is a directed 2-factor of $D$ consisting of $t$ pairwise disjoint directed cycles of lengths $m_1,\ldots,m_t$, respectively. A {\em $(\vec{C}_{m_1},\ldots,\vec{C}_{m_t})$-factorization} of $D$ is a decomposition of $D$ into $(\vec{C}_{m_1},\ldots,\vec{C}_{m_t})$-factors.

\begin{definition} {\rm
Let $n$ be a positive integer and $S \subseteq \ZZ^\ast$. The {\em circulant digraph} $\vec{X}(n; S)$ with connection set $S$ is the digraph whose vertex set is $\{ u_i: i \in \ZZ_n \}$,  with an arc from $u_i$ to $u_j$ if and only if $j-i \in S$. Such an arc $(u_i,u_j)$ will be called of {\em difference} $j-i$ (evaluated in $\ZZ_n$).
}
\end{definition}

\begin{definition}{\rm
If $D_1$ and $D_2$ are vertex-disjoint digraphs, then $D_1 \bowtie D_2$ is the digraph obtained by taking the union of $D_1$ and $D_2$
together with all possible arcs from $D_1$ to $D_2$ and from $D_2$ to $D_1$.
}
\end{definition}

\begin{definition}{\rm
Let $G$ be a graph. Then we say that $S\subseteq E(G)$ {\em covers} a vertex $x\in V(G)$ $t$ times if the vertex $x$ is incident to  exactly $t$ edges of $S$.
}
\end{definition}

In this paper we examine the existance of a $(\vec{C}_{m_1}, ..., \vec{C}_{m_t})$-factorization of $K_{n}^{*}$. An obvious necessary condition is that $2\leq m_i \leq n$, for each $i=1,2,..., t$, and also $m_1+m_2+...+m_t=n$.

We can easily see that if there exists a solution for the Oberwolfach Problem (for $n$ odd) with cycles of lengths $m_1, ..., m_t$, then there exist a solution for the Directed Oberwolfach Problem with cycles of lengths $m_1, ..., m_t$.

\begin{obs}\label{obs}
If $OP(m_1, m_2, ..., m_t)$ has a solution and $n$ is odd, then $OP^{*}(m_1, m_2, ..., m_t)$ has a solution.
\end{obs}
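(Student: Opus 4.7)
The plan is to exploit the natural two-to-one relationship between undirected cycles (of length at least $3$) and directed cycles: each undirected cycle $C$ of length $m \geq 3$ admits exactly two orientations, yielding two distinct directed cycles $\vec{C}$ and $\vec{C}^{-1}$, and the union of their arc sets consists precisely of the two arcs in each direction for every edge of $C$. Since the undirected Oberwolfach problem only involves cycle lengths $m_i \geq 3$, this doubling trick applies uniformly to every cycle appearing in a solution.

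First I would take a solution of $OP(m_1,\ldots,m_t)$, which is a $(C_{m_1},\ldots,C_{m_t})$-factorization $\{F_1,\ldots,F_{(n-1)/2}\}$ of $K_n$ (such a factorization has exactly $(n-1)/2$ factors because $n$ is odd). For each $F_i$, write $F_i = C_{i,1} \cup \ldots \cup C_{i,t}$ where $C_{i,j}$ is a cycle of length $m_j$. For each $j$ choose an orientation of $C_{i,j}$, obtaining a directed cycle $\vec{C}_{i,j}$; let $\vec{F}_i = \vec{C}_{i,1} \cup \ldots \cup \vec{C}_{i,t}$ and $\vec{F}_i^{-1} = \vec{C}_{i,1}^{-1} \cup \ldots \cup \vec{C}_{i,t}^{-1}$. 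Each of $\vec{F}_i$ and $\vec{F}_i^{-1}$ is then a $(\vec{C}_{m_1},\ldots,\vec{C}_{m_t})$-factor of $K_n^*$.

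Next I would verify that $\{\vec{F}_1,\vec{F}_1^{-1},\ldots,\vec{F}_{(n-1)/2},\vec{F}_{(n-1)/2}^{-1}\}$ is a decomposition of $K_n^*$. The arc set of $\vec{F}_i \cup \vec{F}_i^{-1}$ consists of exactly the two arcs $(u,v)$ and $(v,u)$ for each edge $uv$ of $F_i$. Since $\{F_1,\ldots,F_{(n-1)/2}\}$ is a decomposition of $K_n$, the arc sets of the $\vec{F}_i$'s and $\vec{F}_i^{-1}$'s are pairwise disjoint and cover every arc of $K_n^*$ exactly once. This gives a $(\vec{C}_{m_1},\ldots,\vec{C}_{m_t})$-factorization of $K_n^*$ into $n-1$ factors, proving that $OP^{*}(m_1,\ldots,m_t)$ has a solution.

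There is no real obstacle here; the argument is essentially bookkeeping. The only subtlety worth flagging is that the construction requires $m_j \geq 3$ so that the two orientations of each cycle yield distinct directed cycles whose arc sets are disjoint — this is automatic in the undirected setting, since $OP(m_1,\ldots,m_t)$ is only defined for $m_j \geq 3$.
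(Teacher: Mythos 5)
Your proof is correct and follows exactly the paper's approach: the paper's one-line justification is precisely to take two copies of each 2-factor of the $OP(m_1,\ldots,m_t)$ solution and orient each cycle in the two opposite directions. Your write-up just spells out the bookkeeping in more detail.
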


A solution for $OP^{*}(m_1, m_2, ..., m_t)$ is obtained from a solution of $OP(m_1, m_2, ..., m_t)$ by taking two copies of  each 2-factor, and directing each cycle in the two 2-factors in two ways.

\section{Results}

In this section, we find a solution for some cases of the Directed Oberwolfach Problem that cannot be solved using Observation \ref{obs}. First, we consider two cases of the Directed Oberwolfach Problem where the length of each cycle is more than two. These correspond to the only known cases of the Oberwolfach Problem with variable cycle lengths that are known to have no solution. Then, we examine some cases of the Directed Oberwolfach Problem which have at least one cycle of length two.  

\begin{lem}\label{lmm:C4+C5}
There exists a $(\vec{C}_4, \vec{C}_{5})$-factorization of $K_9^*$.
\end{lem}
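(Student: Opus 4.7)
The plan is to exhibit a $(\vec{C}_4, \vec{C}_5)$-factorization of $K_9^*$ by developing a single starter factor under an automorphism of order $8$. I label the vertex set as $\{\infty\} \cup \ZZ_8$ and let $\alpha$ denote the automorphism of $K_9^*$ that fixes $\infty$ and acts as $v \mapsto v + 1$ on $\ZZ_8$. Since $K_9^*$ has $72$ arcs and each $(\vec{C}_4, \vec{C}_5)$-factor has $9$ arcs, exactly $8$ factors are required, matching the order of $\langle \alpha \rangle$.

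Under the $\alpha$-action the arc set of $K_9^*$ partitions into nine orbits, each of size $8$: the orbit of arcs $(\infty, v)$, the orbit of arcs $(v, \infty)$, and, for each $d \in \{1, \ldots, 7\}$, the orbit of arcs $(u, v)$ with $u, v \in \ZZ_8$ and $v - u \equiv d \pmod 8$. The strategy is to look for a \emph{starter} factor $F_0$, that is, a $(\vec{C}_4, \vec{C}_5)$-factor containing exactly one arc from each of the nine orbits. Once such $F_0$ is in hand, the eight translates $\alpha^i F_0$ with $0 \le i \le 7$ are pairwise arc-disjoint, each is a $(\vec{C}_4, \vec{C}_5)$-factor (since $\alpha$ is an automorphism of $K_9^*$), and together they cover all $72$ arcs, producing the required factorization.

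The remaining task, and the main obstacle, is to construct such a starter. Suppose $\infty$ lies in the $\vec{C}_5$-component; then the four arcs of $\vec{C}_4$ together with the three internal arcs of $\vec{C}_5$ must realise each nonzero gap in $\ZZ_8$ exactly once, while the two remaining arcs of $\vec{C}_5$ take care of the orbits through $\infty$. Since the gaps of $\vec{C}_4$ sum to $0 \pmod 8$, they form one of the five $4$-subsets of $\{1, \ldots, 7\}$ with that property, namely $\{1,2,6,7\}$, $\{1,3,5,7\}$, $\{1,4,5,6\}$, $\{2,3,4,7\}$, and $\{2,3,5,6\}$. For each candidate subset I enumerate the $\vec{C}_4$-realisations in $\ZZ_8$ and, for each, search for a $\vec{C}_5$ through $\infty$ on the complementary five vertices whose three internal arcs realise the complementary $3$-set of gaps. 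A valid outcome of this short search is
\[
\vec{C}_4 = (0,1,5,3), \qquad \vec{C}_5 = (\infty, 2, 4, 7, 6),
\]
whose seven non-$\infty$ arcs realise the gaps $1, 4, 6, 5, 2, 3, 7$ in $\ZZ_8$, respectively, and whose two $\infty$-arcs are $(\infty,2)$ and $(6,\infty)$. Developing this $F_0$ under $\alpha$ then gives the required factorization.
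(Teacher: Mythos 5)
Your proposal is correct: the starter $\vec{C}_4=(0,1,5,3)$ and $\vec{C}_5=(\infty,2,4,7,6)$ does realise each nonzero difference of $\ZZ_8$ exactly once together with one arc into and one arc out of $\infty$, so developing it under the rotation of order $8$ yields the factorization. This is essentially the same method as the paper's proof, which also rotates a single starter $(\vec{C}_4,\vec{C}_5)$-factor of $K_8^*\bowtie K_1^*$ covering each difference once (only the specific starter cycles differ).
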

\begin{proof}
View $K_9^*$ as the join $K_{8}^* \bowtie K_1^*$, where the vertex set of $K_1^*$ is $\{ u_{\infty} \}$, and $K_{8}^*$ is viewed as the circulant digraph $\vec{X}(8;S)$ with vertex set $\{ u_i: i \in \ZZ_{8} \}$ and connection set $S=\{ \pm 1,\pm 2,\pm 3,4 \}$. For $i \in \ZZ_{8}$, arcs of the forms $(u_i,u_{\infty})$ and $(u_{\infty}, u_i)$ will be called of difference $\infty$ and $-\infty$, respectively. Define the permutation $\rho=(u_0 \, u_1 \, \ldots \, u_{7})(u_{\infty})$.

Next define the directed 5-cycle 
$$C_0=u_1 \, u_2 \, u_{\infty} \, u_6 \, u_4 \, u_1,$$
and the directed 4-cycle 
$$C_1=u_0 \, u_7 \, u_3 \, u_5 \, u_0.$$

Observe that $R=\{C_0, C_1\}$ is a $(\vec{C}_4, \vec{C}_{5})$-factor of $K_9^*$ (see Figure \ref{45}) containing exactly one arc of each difference in the set
$$\left\{ \pm 1, \pm 2, \pm3, 4, \pm \infty \right\}.$$\\

\begin{figure}[h!]
\begin{center}
\centerline{\includegraphics[scale=0.35]{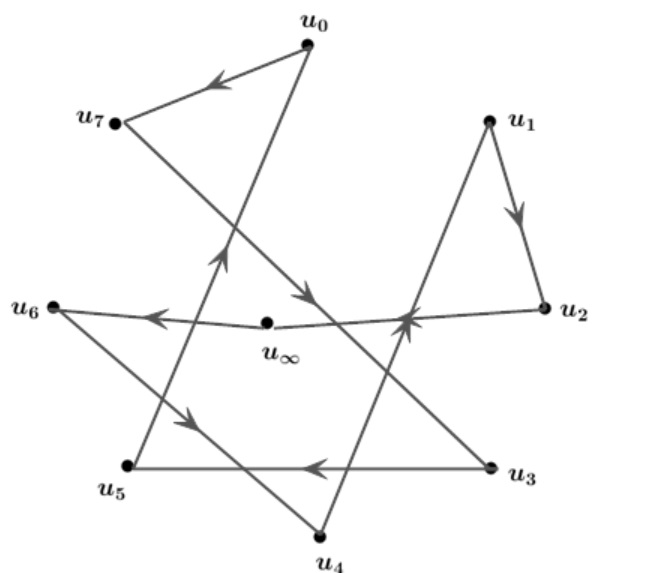}}
\caption{The $(\vec{C}_4, \vec{C}_5)-$factor $R$ of $K^{*}_{9}$.}
\label{45}
\end{center}
\end{figure}

From the properties of $R$ we conclude that
$$\{ \rho^i(R): i \in \ZZ_{8} \}$$
is a $(\vec{C}_4, \vec{C}_{5})$-factorization of $K_9
^*$. 
\hfill
\end{proof}


\begin{lem}\label{lmm:C3+C3+C5}
There exists a $(\vec{C}_3, \vec{C}_3, \vec{C}_{5})$-factorization of $K_{11}^*$.
\end{lem}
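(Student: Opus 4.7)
The plan is to mirror the rotational construction used in Lemma~\ref{lmm:C4+C5}. I would view $K_{11}^*$ as the join $K_{10}^* \bowtie K_1^*$, where $K_1^*$ has vertex set $\{u_\infty\}$, and realize $K_{10}^*$ as the circulant digraph $\vec{X}(10; S)$ with connection set $S = \{\pm 1, \pm 2, \pm 3, \pm 4, 5\}$ on vertex set $\{u_i : i \in \ZZ_{10}\}$. As before, arcs to and from $u_\infty$ are labelled with differences $\infty$ and $-\infty$, and I would fix the rotation $\rho = (u_0 \, u_1 \, \ldots \, u_9)(u_\infty)$ of order $10$. The goal is to exhibit one $(\vec{C}_3, \vec{C}_3, \vec{C}_5)$-factor $R$ of $K_{11}^*$ whose $11$ arcs realize each difference in $\{\pm 1, \pm 2, \pm 3, \pm 4, 5, \pm\infty\}$ exactly once; then $\{\rho^i(R) : i \in \ZZ_{10}\}$ will be the required factorization into $10$ factors, covering all $10 \cdot 11 = 110$ arcs of $K_{11}^*$.

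The design of $R$ then splits into deciding where to put $u_\infty$ and how to distribute the nine finite differences. I would put $u_\infty$ inside a $3$-cycle of the form $u_a \, u_\infty \, u_b \, u_a$, which uses $\pm \infty$ together with a single finite difference $a - b$. The sum of the finite differences in $S$ is $5 \pmod{10}$, while the remaining $3$-cycle and the $5$-cycle must each have finite-difference sum $0 \pmod{10}$; this forces $a - b \equiv 5$, and a convenient choice is the $3$-cycle $u_5 \, u_\infty \, u_0 \, u_5$.

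What remains is to partition the eight vertices $\{u_1, u_2, u_3, u_4, u_6, u_7, u_8, u_9\}$ and the eight differences $\{\pm 1, \pm 2, \pm 3, \pm 4\}$ into a directed $3$-cycle (three vertices, three differences summing to $0 \pmod{10}$) and a directed $5$-cycle (five vertices, five differences summing to $0 \pmod{10}$). The $3$-element subsets of $\{\pm 1, \pm 2, \pm 3, \pm 4\}$ summing to $0$ are precisely $\{-4,1,3\}$, $\{-3,-1,4\}$, $\{-3,1,2\}$, and $\{-2,-1,3\}$, so there are only four candidate difference sets for the second $3$-cycle, and a bounded search over their vertex realizations will settle the matter.

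The main obstacle is the last step: even when a $3$-cycle with an admissible difference set is placed on a valid triple of vertices, the remaining five vertices and five differences may be forced to split into two short cycles instead of assembling into a single directed Hamilton $5$-cycle, so a few placements might have to be tried before one closes up. A concrete configuration I expect to work---and would verify directly---is the second $3$-cycle $u_4 \, u_7 \, u_6 \, u_4$ (realizing the differences $3, -1, -2$) together with the $5$-cycle $u_3 \, u_9 \, u_1 \, u_8 \, u_2 \, u_3$ (realizing $-4, 2, -3, 4, 1$). Once such an $R$ is in hand, the factorization follows from the orbit argument exactly as in Lemma~\ref{lmm:C4+C5}.
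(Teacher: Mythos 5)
Your construction is correct and follows essentially the same approach as the paper: a single starter $(\vec{C}_3,\vec{C}_3,\vec{C}_5)$-factor containing exactly one arc of each difference in $\{\pm 1,\pm 2,\pm 3,\pm 4,5,\pm\infty\}$, developed by the rotation $\rho$ of order $10$. Your explicit factor $\{u_5\,u_\infty\,u_0\,u_5,\ u_4\,u_7\,u_6\,u_4,\ u_3\,u_9\,u_1\,u_8\,u_2\,u_3\}$ checks out (differences $\{5,\pm\infty\}$, $\{3,-1,-2\}$, and $\{-4,2,-3,4,1\}$ respectively), differing from the paper's starter only in the particular arcs chosen.
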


\begin{proof}
View $K_{11}^*$ as the join $K_{10}^* \bowtie K_1^*$, where the vertex set of $K_1^*$ is $\{ u_{\infty} \}$, and $K_{10}^*$ is viewed as the circulant digraph $\vec{X}(10;S)$ with vertex set $\{ u_i: i \in \ZZ_{10} \}$ and connection set $S=\{ \pm 1,\pm 2,\pm 3,\pm4, 5 \}$. For $i \in \ZZ_{10}$, arcs of the forms $(u_i,u_{\infty})$ and $(u_{\infty}, u_i)$ will be called of difference $\infty$ and $-\infty$, respectively. Define the permutation $\rho=(u_0 \, u_1 \, \ldots \, u_{9})(u_{\infty})$.
Next define the two directed 3-cycles 
$$C_0 = u_{0} \, u_{1} \, u_{3} \, u_{0} \quad \mbox{ and } \quad C _1= u_{2} \, u_{7} \, u_{\infty} \, u_{2},$$
and the directed 5-cycle 
$$C_2=u_4 \, u_8 \, u_6 \, u_9 \, u_5 \, u_{4}.$$
Observe that $R=\{C_0, C_1, C_2\}$ is a $(\vec{C}_3, \vec{C}_3, \vec{C}_{5})$-factor of $K_{11}^*$ containing exactly one arc of each difference in the set
$$ \Big\{ \pm 1, \pm 2, \pm 3, \pm 4, 5, \pm \infty \Big\}. $$

\begin{figure}[h!]
\begin{center}

\centerline{\includegraphics[scale=0.35]{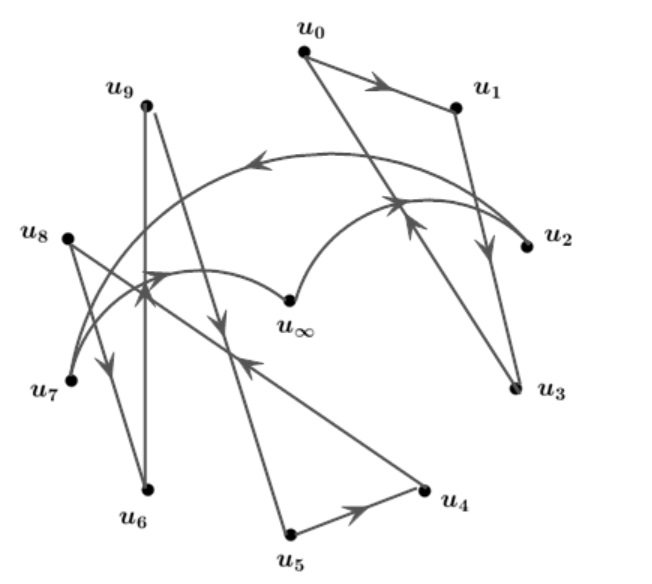}}
\caption{The $(\vec{C}_3,\vec{C}_3, \vec{C}_5)$-factor $R$ of $K^{*}_{11}$.}
\end{center}
\end{figure}

From the properties of $R$ we conclude that
$$\Big\{ \rho^i(R): i \in \ZZ_{10} \Big\}$$
is a $(\vec{C}_3, \vec{C}_3, \vec{C}_{5})$-factorization of $K_{11}
^*$. 
\hfill
\end{proof}

\begin{theorem}\label{thm mod 8}
Let $n$ be an odd integer such that $7\leq n$ and also $n \equiv i (\mod 8
)$ where $i=1, 3$. Then there exsits a $(^{\frac{n-3}{2}}\vec{C}_2, \vec{C}_{3})$-factorization of $K_n^*$.
\end{theorem}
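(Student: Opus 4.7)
The plan is to extend the starter construction used in Lemmas \ref{lmm:C4+C5} and \ref{lmm:C3+C3+C5}. View $K_n^*$ as the join $K_{n-1}^* \bowtie K_1^*$, where $V(K_1^*) = \{u_\infty\}$ and $K_{n-1}^* = \vec{X}(n-1;S)$ is the circulant on $\{u_i : i \in \ZZ_{n-1}\}$ with connection set $S = \{\pm 1, \pm 2, \ldots, \pm\frac{n-3}{2}, \frac{n-1}{2}\}$, and let $\rho = (u_0\, u_1\, \ldots\, u_{n-2})(u_\infty)$. Since $n-1$ is even, the arcs of $K_n^*$ split into $n$ difference classes
$$\Bigl\{\pm 1, \pm 2, \ldots, \pm\tfrac{n-3}{2}, \tfrac{n-1}{2}, \pm\infty\Bigr\},$$
each a $\rho$-orbit of length $n-1$. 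Since a single $(\vec{C}_2, \ldots, \vec{C}_2, \vec{C}_3)$-factor has exactly $n$ arcs, it suffices to construct a starter factor $R$ containing one arc of each difference; then $\{\rho^i(R): i \in \ZZ_{n-1}\}$ will be the desired factorization.

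Next I pin down the structure of $R$. Because $\frac{n-1}{2}$ is self-inverse in $\ZZ_{n-1}$, any digon on $\{u_a, u_{a+(n-1)/2}\}$ would cover this difference twice, so $\frac{n-1}{2}$ must appear on the unique 3-cycle. The simplest choice is to route the 3-cycle through $u_\infty$ and, after a rotation, take
$$C_0 = u_0\, u_{(n-1)/2}\, u_\infty\, u_0,$$
which disposes simultaneously of the three differences $\frac{n-1}{2}$, $\infty$, and $-\infty$. The remaining $\frac{n-3}{2}$ digons must then partition $\ZZ_{n-1}\setminus\{0, \frac{n-1}{2}\}$ into pairs whose unordered differences $\{b-a, a-b\}$ exhaust $\{\pm 1, \pm 2, \ldots, \pm\frac{n-3}{2}\}$. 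This reduces the theorem to a cyclic Skolem-type pairing problem; the base cases $n=9$ (pairs $\{1,6\}, \{2,3\}, \{5,7\}$, giving differences $\pm 3, \pm 1, \pm 2$) and $n=11$ (pairs $\{1,2\}, \{6,8\}, \{4,7\}, \{3,9\}$, giving differences $\pm 1, \pm 2, \pm 3, \pm 4$) illustrate the pattern.

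The main work is then to exhibit such pairings uniformly for all $k$. I would split into the residue classes $n = 8k+1$ and $n = 8k+3$ and give explicit parametric constructions of pairs in $\ZZ_{8k}$ and $\ZZ_{8k+2}$ respectively, using mostly pairs $\{a, a+d\}$ with small direct difference $d$ for the small target differences, together with a handful of wrap-around pairs $\{a, b\}$ (with $b-a$ close to $n-1$) for the largest one or two target differences; verification of disjointness, vertex cover, and the difference spectrum is then routine. The restriction to $n \equiv 1, 3 \pmod 8$ is natural and is forced by a parity obstruction: any pair $\{a, b\}$ with $\{b-a, a-b\} = \{\pm d\}$ satisfies $a + b \equiv d \pmod 2$, so summing over all $\frac{n-3}{2}$ pairs yields
$$\sum_{v \in \ZZ_{n-1}\setminus\{0, (n-1)/2\}} v\ \equiv\ \sum_{d=1}^{(n-3)/2} d \pmod 2,$$
a congruence that holds precisely for $n \equiv 1, 3 \pmod 8$ and fails for $n \equiv 5, 7 \pmod 8$. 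Thus the excluded residues would require a different starter (for instance, one whose 3-cycle lies entirely inside $K_{n-1}^*$), which is the reason this particular approach is confined to the two residues at hand.
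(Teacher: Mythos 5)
Your reduction is correct, and your overall strategy is genuinely different from the paper's: you seek a \emph{single} starter 2-factor in $K_{n-1}^{*}\bowtie K_1^{*}$ meeting each of the $n$ difference classes of the full rotation $\rho=(u_0\,u_1\,\ldots\,u_{n-2})(u_\infty)$ exactly once, whereas the paper (which proves this statement as the $i=1,3$ cases of Theorem \ref{thm 1,3,7}) splits the $n-1$ non-fixed vertices into two orbits $\{x_i\}$, $\{y_i\}$ of a rotation of order $\frac{n-1}{2}$ and builds \emph{two} starters from 1-factors $F_1,F_2$ of $K_{n-1}$ via Lemmas \ref{lmm:k_{4ell}} and \ref{lmm:k_{4ell+2}}. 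Your observation that the self-paired difference $\frac{n-1}{2}$ must lie on the 3-cycle, the choice $C_0=u_0\,u_{(n-1)/2}\,u_\infty\,u_0$, the resulting reduction to partitioning $\ZZ_{n-1}\setminus\{0,\frac{n-1}{2}\}$ into pairs realizing each difference class $\{d,-d\}$, $1\le d\le\frac{n-3}{2}$, exactly once, and the parity computation showing such a pairing can exist only for $n\equiv 1,3\pmod 8$ (the two sides of your congruence reduce to $0$ and to $\frac{(n-1)(n-3)}{8}$ modulo $2$, respectively) are all sound.

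However, there is a genuine gap at exactly the point where all the work lies: the existence of the required pairing for \emph{every} $n\equiv 1,3\pmod 8$ is asserted (``I would give explicit parametric constructions\ldots verification is then routine'') but never carried out. Checking $n=9$ and $n=11$ does not establish the general case; an explicit family of pairs, split by residue class and verified to be pairwise disjoint, to cover $\ZZ_{n-1}\setminus\{0,\frac{n-1}{2}\}$, and to realize each difference exactly once, \emph{is} the content of the theorem --- it is precisely the role played by the paper's explicit sets $A_1,A_2,A_3$ and $B_1,\ldots,B_5$ and their multi-page case analysis in the proof of Theorem \ref{thm 1,3,7}. Such Skolem-type systems do exist and your plan is very likely completable, but until the parametric pairings are written down and checked --- including the small values of $k$ where generic formulas tend to degenerate, exactly as happens in the paper's subcases $\ell=2,4,6$ --- what you have is a correct proof outline and a correct necessary condition, not a proof.
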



\begin{theorem}\label{thm:C2+C(n-2)}
For any integer $n \geq 5$, the digraph $K^{*}_{n}$ admits a $(\vec{C}_2, \vec{C}_{n-2})$-factorization.
\end{theorem}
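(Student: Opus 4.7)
The plan is to use the same rotational-starter framework as Lemmas~\ref{lmm:C4+C5} and~\ref{lmm:C3+C3+C5}, but to split into two cases according to the parity of $n$, because the closure-of-differences obstruction behaves differently for $n$ even and $n$ odd.

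For $n$ even I would realize $K_n^*$ as $\vec{X}(n-1;\ZZ_{n-1}^\ast)\bowtie K_1^*$, with the $K_1^*$-vertex labelled $\infty$, and set $\rho=(u_0\,u_1\,\ldots\,u_{n-2})(\infty)$. A single starter $R$ should suffice: put $\vec{C}_2=u_0\,\infty\,u_0$ (covering the differences $\pm\infty$) and take for $\vec{C}_{n-2}$ a directed Hamilton cycle on $\{u_1,\ldots,u_{n-2}\}$ whose $n-2$ arcs realize each of the finite differences $1,2,\ldots,n-2$ exactly once. Because $n-1$ is odd we have $\sum_{d=1}^{n-2}d\equiv 0\pmod{n-1}$, so the closure condition on such a cycle is compatible; once the cycle is exhibited, $\{\rho^i(R):0\le i\le n-2\}$ is the desired $(\vec{C}_2,\vec{C}_{n-2})$-factorization by exactly the orbit-difference argument used in the earlier lemmas.

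For $n$ odd the analogous single-starter template fails, since $\sum_{d=1}^{n-2}d\equiv(n-1)/2\not\equiv 0\pmod{n-1}$. I would instead work on vertex set $\ZZ_{n-2}\cup\{\infty_1,\infty_2\}$ with $\rho=(u_0\,u_1\,\ldots\,u_{n-3})(\infty_1)(\infty_2)$ of order $n-2$, and use two starters. The first, $R_1$, is $\rho$-invariant (orbit of size $1$): take $\vec{C}_2=\infty_1\,\infty_2\,\infty_1$ and $\vec{C}_{n-2}=u_0\,u_1\,\cdots\,u_{n-3}\,u_0$, which together consume the two arcs between $\infty_1$ and $\infty_2$ and every finite arc of difference $1$. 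The second, $R_2$, has $\vec{C}_2=u_0\,\infty_1\,u_0$ and $\vec{C}_{n-2}=\infty_2\,v_1\,v_2\,\cdots\,v_{n-3}\,\infty_2$, where $v_1,\ldots,v_{n-3}$ is a directed Hamilton path on $\ZZ_{n-2}\setminus\{0\}$ whose $n-4$ arcs realize each difference in $\{2,3,\ldots,n-3\}$ exactly once; these differences sum to $-1\pmod{n-2}$, so $v_{n-3}\equiv v_1-1$. The factorization is then $\{R_1\}\cup\{\rho^i(R_2):0\le i\le n-3\}$.

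The hard part in both cases will be exhibiting the prescribed-difference Hamilton cycle (even $n$) or Hamilton path (odd $n$) for every relevant $n$. When the modulus $m$ in question ($m=n-1$ in the even case, $m=n-2$ in the odd case) is an odd prime, one can take $v_i=c\,g^i\pmod{m}$ with $g$ a primitive root of $\ZZ_m\setminus\{0\}$ and $c=(g-1)^{-1}$; the consecutive differences $v_{i+1}-v_i=c\,g^i(g-1)=g^i$ then form a geometric progression covering exactly the required set. For composite odd $m$ (such as $m=9,15,21,25,27,\ldots$) and for the small values $n=5,7$ where the required path has only one or three arcs, I would supply explicit ad hoc sequences; once any such starter has been written down, verifying arc-disjointness and complete coverage reduces to the routine orbit calculation already illustrated in Lemmas~\ref{lmm:C4+C5} and~\ref{lmm:C3+C3+C5}.
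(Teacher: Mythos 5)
Your overall architecture is the same as the paper's: for $n$ even, realize $K_n^*$ as a circulant on $n-1$ vertices joined with one extra vertex $\infty$ and rotate a single starter consisting of a $2$-cycle through $\infty$ and an $(n-2)$-cycle hitting every finite difference once; for $n$ odd, work over $\ZZ_{n-2}$ with two extra vertices $\infty_1,\infty_2$, use one $\rho$-invariant factor absorbing the $\infty_1\infty_2$ arcs together with a full difference class, and rotate a second starter through the remaining differences. Your bookkeeping (arc counts, orbit sizes, which differences each starter must carry, the parity/sum obstruction explaining why the odd case needs two special vertices) is all correct and matches the paper's.

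The gap is that you never actually produce the object on which everything hinges: a directed Hamilton cycle on $\ZZ_{n-1}\setminus\{0\}$ realizing each difference in $\ZZ_{n-1}^{*}$ exactly once (even case), respectively a Hamilton path on $\ZZ_{n-2}\setminus\{0\}$ realizing each difference in $\{2,\ldots,n-3\}$ exactly once (odd case). Your primitive-root construction $v_i=c\,g^i$ is correct but only applies when the modulus $m$ is prime, and ``explicit ad hoc sequences'' for composite odd $m$ is a promise covering infinitely many values of $n$ ($m=9,15,21,25,27,\ldots$), not a proof. What you are asking for in the even case is precisely an R-sequencing of $\ZZ_{n-1}$, whose existence for all odd orders is a genuine theorem, not a routine verification. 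The paper avoids this entirely by not insisting that the omitted circulant vertex be $u_0$: it uses the zigzag walk $u_0\,u_{-1}\,u_1\,u_{-2}\,u_2\,\ldots$, whose consecutive arcs automatically realize the differences $-1,2,-3,4,\ldots$ exactly once, and omits the vertex $u_{-(k+1)}$ from the \emph{middle} of the zigzag (placing the $2$-cycle there), with a single skip $u_{-k}\,u_k\,u_{-(k+2)}$ that preserves the difference pattern. That one explicit formula works uniformly for every $n$, which is exactly the uniformity your sketch is missing. To complete your argument you would either need to prove the R-sequencing existence result you are implicitly invoking, or replace your starter by a zigzag-type starter of the paper's kind.
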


\begin{proof}
{\sc Case $n$ odd, $n \ge 7$.} Let $\ell=\frac{n-3}{2}$, and view $K_n^*$ as the join $K_{2\ell+1}^* \bowtie K_2^*$, where the vertex set of $K_2^*$ is $\{ u_{\infty_1},u_{\infty_2} \}$, and $K_{2\ell+1}^*$ is further viewed as the circulant digraph $\vec{X}(2\ell+1;S)$ with vertex set $\{ u_i: i \in \ZZ_{2\ell+1} \}$ and connection set $S=\{ \pm d: d=1,2,\ldots,\ell \}$. For $i \in \ZZ_{2\ell+1}$ and $j \in \{ 1,2 \}$, arcs of the forms $(u_i,u_{\infty_j})$ and $(u_{\infty_j}, u_i)$ will be called of difference $\infty_j$ and $-\infty_j$, respectively. The difference of the arcs  $(u_{\infty_1},u_{\infty_2})$ and $(u_{\infty_2},u_{\infty_1})$ remains undefined. Define the permutation $\rho=(u_0 \, u_1 \, \ldots \, u_{2\ell})(u_{\infty_1})(u_{\infty_2})$.

Let $k=\lfloor \frac{\ell}{2} \rfloor$, and define the directed closed walk
$$C = u_0 \, u_{-1} \, u_{1}\, \ldots \, u_{-k} \, u_{k}\, u_{-(k+2)} \, u_{k+1}\, \ldots \, u_{-\ell} \, u_{\ell-1} \, u_{\ell} \, u_{\infty_2} \, u_0.$$
It is not difficult to verify that $C$ is in fact a directed $(2\ell+1)$-cycle containing exactly one arc of each difference in the set
$$\{ \pm 1, \pm 2, \ldots, \pm (\ell-1), \ell, \pm \infty_1 \}.$$
Moreover, $C$ is disjoint from the directed 2-cycle
$$C'=u_{-(k+1)} \, u_{\infty_1} \, u_{-(k+1)},$$
and hence $R=\{ C, C' \}$ is a $(\vec{C}_2, \vec{C}_{n-2})$-factor of $K_n^*$ containing exactly one arc of each difference in the set
$$D=\{ \pm 1, \pm 2, \ldots, \pm (\ell-1), \ell, \pm \infty_1, \pm \infty_2 \}.$$
Next, we define the directed $(2\ell+1)$-cycle
$$C'' = u_0 \, u_{-\ell} \, u_1 \, u_{-(\ell-1)} \, u_2 \, \ldots \, u_{-2} \, u_{\ell-1} \, u_{-1} \,  u_{\ell} \, u_0$$
and the directed 2-cycle
$$C''' = u_{\infty_1} \, u_{\infty_2} \, u_{\infty_1}.$$
Observe that $R'=\{ C'', C''' \}$ is a $(\vec{C}_2, \vec{C}_{n-2})$-factor of $K_n^*$ containing all arcs of difference $-\ell$, as well as the two arcs of undefined difference. For an example, you can see the $(\vec{C}_2, \vec{C_7})$-factors $R$ and $R^{'}$ of $K^{*}_9$ in Figure \ref{rrr}.
\\
\begin{figure}[h!]\label{rrr}
\begin{center}

\centerline{\includegraphics[scale=.52]{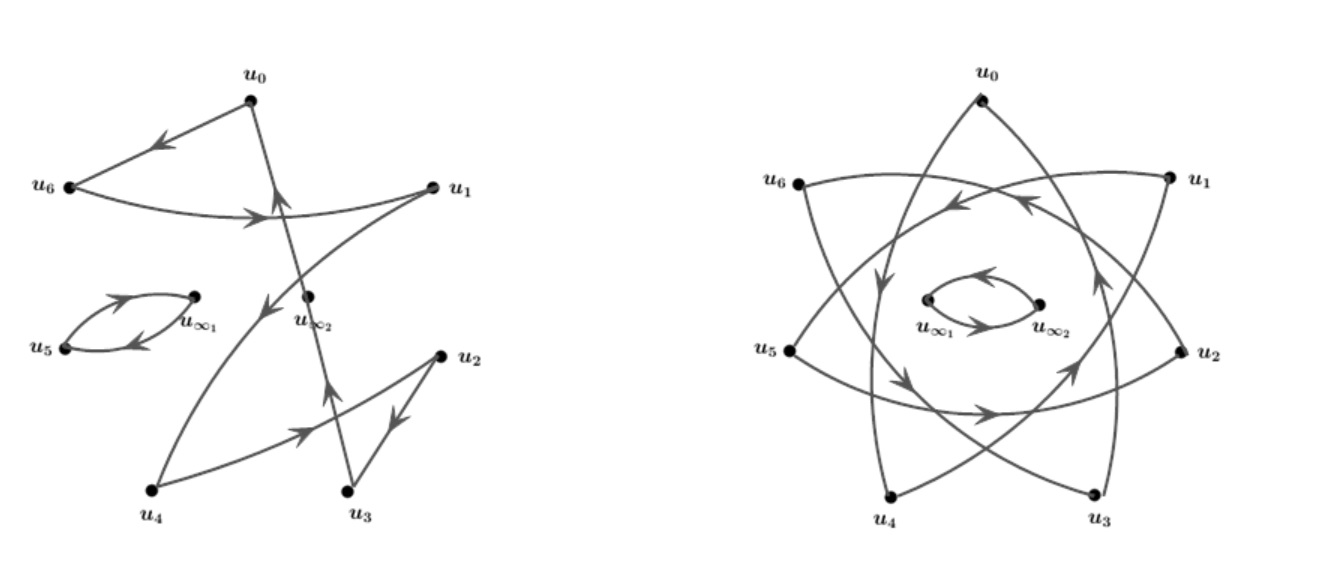}}
\caption{The $(\vec{C}_2, \vec{C_7})$-factors $R$ and $R^{'}$ of $K^{*}_9$.}
\end{center}
\end{figure}
\\
From the properties of $R$ and $R'$ we conclude that
$$\{ \rho^i(R): i \in \ZZ_{2\ell+1} \} \cup \{ R' \}$$
is a $(\vec{C}_2, \vec{C}_{n-2})$-factorization of $K_n^*$.

{\sc Case $n$ even or $n = 5$.} Now view $K_n^*$ as the join $K_{n-1}^* \bowtie K_1^*$, where the vertex set of $K_1^*$ is $\{ u_{\infty} \}$, and $K_{n-1}^*$ is  viewed as the circulant digraph $\vec{X}(n-1;S)$ with vertex set $\{ u_i: i \in \ZZ_{n-1} \}$ and connection set $S=\ZZ_{n-1}^*$. For $i \in \ZZ_{n-1}$, arcs of the forms $(u_i,u_{\infty})$ and $(u_{\infty}, u_i)$ will be called of difference $\infty$ and $-\infty$, respectively. Define the permutation $\rho=(u_0 \, u_1 \, \ldots \, u_{n-2})(u_{\infty})$.

First assume $n=5$. We define two directed 3-cycles
$$C_0 = u_{0} \, u_{1} \, u_{2} \, u_{0} \quad \mbox{ and } \quad C _1= u_{3} \, u_{2} \, u_{1} \, u_{3},$$
and two directed 2-cycles
$$C'_0 = u_{3} \, u_{\infty} \, u_{3}  \quad \mbox{ and } \quad C'_1 = u_{0} \, u_{\infty} \, u_{0}.$$
Observe that each of $R_0=\{C_0, C'_0\}$ and $R_1=\{C_1, C'_1\}$ is a $(\vec{C}_2, \vec{C}_{3})$-factor of $K_5^*$. Let $R_2$ and $R_3$ be obtained from $R_0$ and $R_1$, respectively, by adding 2 to the subscript of each vertex. It is not difficult to verify that $\{ R_0, R_1, R_2, R_3 \}$ is a $(\vec{C}_2, \vec{C}_{3})$-factorization of $K_5^*$.

Next, assume $n$ is even. Let $k=\lceil\frac{n-4}{4}\rceil$, and define the directed closed walk
$$C_0 = u_0 \, u_{-1} \, u_{1}\, \ldots \, u_{-k} \, u_{k}\, u_{-(k+2)} \, u_{k+1}\, \ldots \, u_{-\frac{n-2}{2}} \, u_{\frac{n-4}{2}} \, u_{\frac{n-2}{2}} \, u_0.$$
It is not difficult to verify that $C_0$ is in fact a directed $(n-2)$-cycle containing exactly one arc of each difference in the set 
$$\left\{ \pm 1, \pm 2, \ldots, \pm \frac{n-2}{2} \right\}.$$
Moreover, $C_0$ is disjoint from the directed 2-cycle
$$C_1=u_{-(k+1)} \, u_{\infty} \, u_{-(k+1)},$$
and hence $R=\{ C_0, C_1 \}$ is a $(\vec{C}_2, \vec{C}_{n-2})$-factor of $K_n^*$ containing exactly one arc of each difference in the set
$$\left\{ \pm 1, \pm 2, \ldots, \pm \frac{n-2}{2}, \pm \infty \right\}=\ZZ_{n-1}^* \cup \{ \pm \infty \}.$$
For an example, you can see the $(\vec{C}_2, \vec{C}_{8})$-factor $R$ of $K_{10}^*$ in Figure \ref{h}.
\\
\begin{figure}[h!]
\begin{center}

\centerline{\includegraphics[scale=.52]{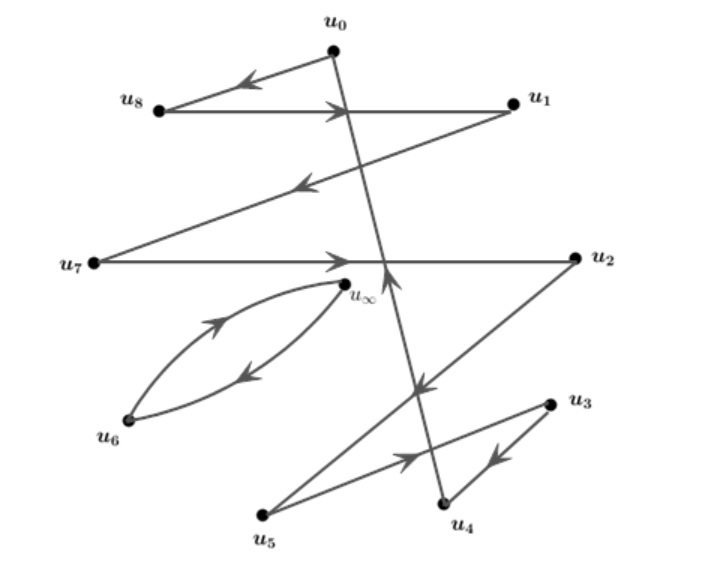}}
\caption{A $(\vec{C}_2, \vec{C}_{8})$-factor of $K_{10}^*$.}
 \label{h}
\end{center}
\end{figure}
\\
From the properties of $R$ we conclude that
$$\{ \rho^i(R): i \in \ZZ_{n-1} \}$$
is a $(\vec{C}_2, \vec{C}_{n-2})$-factorization of $K_n^*$. 
\hfill
\end{proof}


As a consequence of Theorem \ref{thm:C2+C(n-2)}, we obtain a complete solution to the Directed Oberwolfach Problem with two tables and $n$ odd.
\begin{cor}
Let $2\leq m \leq n-2$ and $n$ be odd. Then $K_{n}^{*}$ admits a $(\vec{C}_{m}, \vec{C}_{n-m})$-factorization if and only if $(m, n)\neq (3, 6)$.
\end{cor}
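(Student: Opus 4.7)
The plan is a short case analysis that combines three results established earlier in the paper: Theorem \ref{thm:C2+C(n-2)}, Observation \ref{obs}, and Lemma \ref{lmm:C4+C5}. Since $n$ is assumed odd, the forbidden pair $(m,n)=(3,6)$ is automatically ruled out by hypothesis, so only the existence direction requires work.

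First, I would dispose of the case when one of the cycle lengths is $2$. If $m=2$ (or, by the symmetry of the factor type, $m=n-2$), then Theorem \ref{thm:C2+C(n-2)} delivers the required $(\vec{C}_2, \vec{C}_{n-2})$-factorization of $K_n^*$ for every $n\geq 5$, and in particular for every odd $n\geq 5$.

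Next, assume $3\leq m\leq n-3$. Since $n$ is odd, exactly one of $m$ and $n-m$ is even. By Traetta's theorem (quoted in the introduction), $OP(m,n-m)$ is solvable except for $OP(3,3)$ and $OP(4,5)$. The first exception forces $n=6$, which is even and hence excluded by hypothesis. Thus, whenever $n$ is odd, $3\leq m\leq n-3$, and $\{m,n-m\}\neq\{4,5\}$, a solution of $OP(m,n-m)$ exists; since $n$ is odd, Observation \ref{obs} lifts it to the desired $(\vec{C}_m,\vec{C}_{n-m})$-factorization of $K_n^*$.

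The one remaining case, $\{m,n-m\}=\{4,5\}$ with $n=9$, is handled directly by Lemma \ref{lmm:C4+C5}. There is no genuine obstacle: the only point to verify is that when Traetta's two exceptions are intersected with the condition that $n$ be odd, only $(4,5)$ survives, and this sole remaining case has already been resolved by the explicit construction in Lemma \ref{lmm:C4+C5}.
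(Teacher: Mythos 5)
Your proof is correct and follows essentially the same route as the paper: Theorem \ref{thm:C2+C(n-2)} for the case $m=2$, Traetta's two-table theorem combined with Observation \ref{obs} for $3\leq m\leq n-3$, and Lemma \ref{lmm:C4+C5} for the surviving exception $\{m,n-m\}=\{4,5\}$. Your explicit remark that the $(3,3)$ exception is void because $n$ is odd is a small clarification the paper leaves implicit, but the argument is the same.
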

\begin{proof}

If $m=2$, then from Theorem \ref{thm:C2+C(n-2)}, obviously $K_{n}^{*}$ admits a $(\vec{C}_{m}, \vec{C}_{n-m})$-factorization.
Suppose that $m\geq 3$. Traetta \cite{TT} has proved that the Oberwolfach Problem for two tables has a solution except for the case with two tables of size $3$, and the case with a table of size $4$ and a table of size $5$. Thus, $OP(m, n-m)$ has a solution except for the cases $(m, n)=(3, 6)$ and $(4, 9)$. Then, Observation \ref{obs} and Lemma \ref{lmm:C4+C5} yield that  $OP^{*}(m, n-m)$ has a solution if and only if $(m, n)\neq (3, 6)$. 

\end{proof}


Now, we consider the Directed Oberwolfach Problem in the case of $t$ cycles of length 2 and one cycle of length 3, where $n=2t+3$. We proceed with the following notation and lemmas. 

\begin{notation}\rm\label{note1}Let $k$ be a positive integer and $L=\{1, 2, ..., \lfloor \frac{k}{2} \rfloor\}$. 
Take $\{x_i: i\in \ZZ_{k}\}\cup \{y_i: i\in \ZZ_{k}\}$ as the vertex set of $K_{2k}$. For $i \in \ZZ_{k}$ and $ d \in L$, edges of the forms $x_ix_{i+d}$ and $y_{i}y_{i+d}$ will be called, respectively, edges of \textit{left} and \textit{right pure length} $d$. Edges of the form $x_i y_{i+d}$, where $ i,d \in \ZZ_{k}$, will be called edges of\textit{ mixed difference} $d$. 

Let $S$ be a subset of the edge set of $K_{2k}$. Then, we define sets $\mathcal{L}(S), \mathcal{R}(S), \mathcal{M}(S), \mathcal{X}(S),$ and $ \mathcal{Y}(S)$ as follows.
\begin{eqnarray*}
\mathcal{L}(S)&=&\Big\{d\in L: x_{i}x_{i+d} \in S,~ \mbox{for some } i\in \ZZ_{k}  \Big\},\\
\mathcal{R}(S)&=&\Big\{d\in L: y_{i}y_{i+d} \in S,~ \mbox{for some } i\in \ZZ_{k}  \Big\},\\
\mathcal{M}(S)&=&\Big\{d\in \ZZ_{k}: x_{i}y_{i+d} \in S,~ \mbox{for some } i\in \ZZ_{k}  \Big\},\\
\mathcal{X}(S)&=& \Big\{ i\in \ZZ_{k}: x_iu\in S , ~ \mbox{for some } u \in V( K_{2k} ) \Big\},\\
\mathcal{Y}(S)&=& \Big\{ i\in \ZZ_{k}: y_iu\in S , ~ \mbox{for some } u \in V( K_{2k} ) \Big\}.\\
\end{eqnarray*}

\end{notation}\rm


\begin{lem}\label{lmm:k_{4ell}}
Let $\ell$ be a positive integer, and adopt the terminology from Notation \ref{note1}. Assume $K_{4\ell}$ has 1-factors $F_1$ and $F_2$ such that

(i) $F_1$ and $F_2$ jointly contain exactly one edge of each $($left and right$)$ pure length, and exactly one edge of each mixed difference, and 

(ii) $F_1$ and $F_2$ each contain exactly one edge of pure length $\ell$.
 
 Then $K^{*}_{4\ell+1}$ admits a $(\vec{C}_2,...,\vec{C}_2, \vec{C}_3)$-factorization.
\end{lem}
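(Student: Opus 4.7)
The plan is to use the two 1-factors $F_1, F_2$ of $K_{4\ell}$ to build two base $(\vec{C}_2,\ldots,\vec{C}_2,\vec{C}_3)$-factors of $K_{4\ell+1}^*$, and to develop them under a cyclic automorphism of order $2\ell$ to obtain the required $4\ell$ factors. First, I would identify the vertex set of $K_{4\ell+1}^*$ with $\{x_i,y_i: i \in \ZZ_{2\ell}\} \cup \{\infty\}$, so that $K_{4\ell}$ from Notation \ref{note1} appears as the induced subdigraph on the finite part, and let $\rho=(x_0\, x_1\, \ldots\, x_{2\ell-1})(y_0\, y_1\, \ldots\, y_{2\ell-1})(\infty)$. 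I would then catalogue the orbits of $\rho$ on the arcs of $K_{4\ell+1}^*$: each has size $2\ell$, and there are exactly $8\ell+2$ of them, indexed by the pure and mixed differences of Notation \ref{note1} (with the pure $\ell$ orbits self-paired, since $\rho^\ell$ reverses each length-$\ell$ arc) plus the four infinity orbits $\{(x_j,\infty)\}$, $\{(\infty,x_j)\}$, $\{(y_j,\infty)\}$, $\{(\infty,y_j)\}$.

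From conditions (i) and (ii) I would deduce that $F_1$ and $F_2$ are disjoint, since their sizes sum to $4\ell$ which matches the total number of difference classes counted in (i), and that one of them contains the unique left pure $\ell$ edge and the other the unique right pure $\ell$ edge. After relabeling if necessary, assume $e_1 = x_a x_{a+\ell} \in F_1$ and $e_2 = y_b y_{b+\ell} \in F_2$. The base factor $R_j$ is then built by replacing each edge $uv \in F_j \setminus \{e_j\}$ by the directed 2-cycle $u\, v\, u$, and replacing $e_j$ by the directed 3-cycle $x_a\, x_{a+\ell}\, \infty\, x_a$ (for $j=1$) or $y_b\, y_{b+\ell}\, \infty\, y_b$ (for $j=2$). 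Each $R_j$ is clearly a $(\vec{C}_2,\ldots,\vec{C}_2,\vec{C}_3)$-factor with $4\ell+1$ arcs.

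The core of the argument is verifying that $\{\rho^i(R_j) : i \in \ZZ_{2\ell},\, j \in \{1,2\}\}$ partitions the arc set of $K_{4\ell+1}^*$, which reduces to showing that $R_1$ and $R_2$ together hit each $\rho$-orbit exactly once. Each non-length-$\ell$ edge of $F_j$ contributes its two arcs to two distinct orbits (opposite pure orbits for pure edges; $x \to y$ and $y \to x$ orbits for mixed edges), and the 3-cycle in $R_j$ hits three orbits: the self-paired pure $\ell$ orbit of its side and the two infinity orbits on that side. By (i), distinct edges of $F_j$ have distinct differences, so the $4\ell+1$ arcs of $R_j$ lie in $4\ell+1$ pairwise distinct orbits; and for $j\neq j'$, each non-$\ell$ difference class belongs to only one $F_j$, while the pure $\ell$ and infinity orbits split cleanly between $R_1$ (left side and $x \leftrightarrow \infty$) and $R_2$ (right side and $y \leftrightarrow \infty$). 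A count shows that together $R_1$ and $R_2$ hit all $8\ell+2$ orbits exactly once, completing the proof.

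The main obstacle is the self-pairing of the pure $\ell$ orbit under $\rho$: because $\rho^\ell$ sends $(x_i, x_{i+\ell})$ to $(x_{i+\ell}, x_i)$, the two arcs of any length-$\ell$ edge lie in a single $\rho$-orbit, so developing a directed 2-cycle on such an edge would duplicate arcs across the factors $R_j$ and $\rho^\ell(R_j)$. Condition (ii) is precisely the hypothesis that bypasses this, supplying each $F_j$ with one length-$\ell$ edge to absorb into a 3-cycle through $\infty$; this couples the otherwise obstructing self-paired orbit with the infinity orbits, which is why both (i) and (ii) are needed for the construction.
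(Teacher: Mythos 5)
Your proposal is correct and follows essentially the same route as the paper: the same identification of $K_{4\ell+1}^*$ as $K_{4\ell}^*\bowtie K_1^*$, the same rotation $\rho$, the same replacement of non-$\ell$ edges by directed $2$-cycles and of the pure-length-$\ell$ edges by $3$-cycles through $\infty$, and the same difference/orbit count to verify the factorization. Your explicit remark that the two arcs of a length-$\ell$ edge lie in a single self-paired $\rho$-orbit is exactly the point the paper's condition (ii) is designed to handle, so the two arguments coincide.
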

\begin{proof}

View $K_{4\ell+1}^*$ as the join $K_{4\ell}^* \bowtie K_1^*$, where the vertex set of $K_1^*$ is $\{ u_{\infty} \}$, and $V(K_{4\ell}^*)=V(K_{4\ell})$. Arcs of the forms $(x_i, x_{i+d})$ and $(y_i, y_{i+d})$, for $d=1,..., 2\ell-1,$ will be called arcs of left and right, respectively, pure difference $d$. Arcs of the forms $(x_i, y_{i+d})$ and $(y_{i+d}, x_{i})$, for $d=0, 1,..., 2\ell-1,$ will be called arcs of left and right, respectively, mixed difference $d$. Also arcs of the forms $(x_i,u_{\infty})$ and $(y_i,u_{\infty})$ will be called of left and right, respectively, difference $\infty$, and arcs of the forms $(u_{\infty},x_i)$ and $(u_{\infty},y_i)$ will be called of left and right, respectively, difference $-\infty$. Define the permutation $\rho=(x_0 x_1...x_{2\ell-1})(y_0 y_1...y_{2\ell-1})(u_{\infty}).$

Now, assume that $F_1$ and $F_2$ are 1-factors of $K_{4\ell}$ which satisfy both of the assumptions $(i)$ and $(ii)$. 
Let $F_{1}^{'}$ and $F_{2}^{'}$ be obtained from $F_1$ and $F_2$ respectively, by replacing each edge $uv$, except edges of pure length $\ell$, by the directed 2-cycle $u\, v\, u$, and replacing 
each edge $uv$ of pure length $\ell$ by the directed 3-cycle $u\, v\, u_{\infty}\, u$. Since $F_1$ and $F_2$ are 1-factors of $K_{4\ell}$, and each of $F_1$ and $F_2$ contains exactly one edge of pure length $\ell$, it is clear that $F^{'}_1$ and $F^{'}_2$ are $(\vec{C}_2,...,\vec{C}_2, \vec{C}_3)$-factors of $K^{*}_{4\ell+1}$. Morever, observe that each edge of pure length $d$ in $K_{4\ell}$, where $d\neq \ell$, was replaced by two opposite arcs with the same endpoints; that is, arcs of pure differences $d$ and $-d$. Similarly, each edge of mixed difference $d$ was replaced by two opposite arcs with the same endpoints; that is, arcs left and right mixed difference $d$. Finally, the edge of pure left (right ) length $\ell$ gave rise to one arc of pure left (right) difference $\ell$, one arc of left (right) difference $\infty$, and one arc of left (right) difference $-\infty$.
Thus, jointly, $F_{1}^{'}$ and $F_{2}^{'}$ contain exactly one arc of each left and right pure difference in the set
$$ \Big\{ 1, ... , 2\ell-1 \Big\}, $$
 exactly one arc of each left and right mixed difference in the set
 $$\Big\{ 0,  1, ..., 2\ell-1\Big\},$$
and exactly one arc of each left and right difference $\pm\infty.$

From the properties of $F_{1}^{'}$ and $F_{2}^{'}$ we conclude that
$$\Big\{ \rho^i(F_{1}^{'}): i \in \ZZ_{2\ell} \Big\}\cup \Big\{ \rho^i(F_{2}^{'}): i \in \ZZ_{2\ell} \Big\}$$
is a  $(\vec{C}_2,...,\vec{C}_2, \vec{C}_{3})$-factorization of $K_{4\ell+1}^{*}$.

\end{proof}


\begin{lem}\label{lmm:k_{4ell+2}}
Let $\ell$ be a positive integer, and adopt the terminology from Notation \ref{note1}. Assume $K_{4\ell +2}$ has 1-factors $F_1$ and $F_2$ such that

(i) $F_1$ and $F_2$ jointly contain exactly one edge of each (left and right) pure length and exactly one edge of each mixed difference, except mixed difference $0$, and 

(ii) $F_1$ and $F_2$ each contain exactly one edge of mixed difference $0$.

Then $K^{*}_{4\ell+3}$ admits a $(\vec{C}_2,...,\vec{C}_2, \vec{C}_3)$-factorization.

\end{lem}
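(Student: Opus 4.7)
The plan is to mirror the proof of Lemma~\ref{lmm:k_{4ell}}, with the edges of \emph{mixed difference}~$0$ now playing the role that edges of pure length~$\ell$ played there: these are the edges (exactly one in each $F_i$) whose two opposing arcs will be ``untied'' through the extra vertex $u_\infty$ to form a directed $3$-cycle. Concretely, I view $K^*_{4\ell+3}$ as $K^*_{4\ell+2}\bowtie K^*_1$ with $V(K^*_1)=\{u_\infty\}$ and set $k=2\ell+1$. I extend Notation~\ref{note1} to arcs in the manner of Lemma~\ref{lmm:k_{4ell}}: for $d\in\{1,\ldots,2\ell\}$, arcs $(x_i,x_{i+d})$ and $(y_i,y_{i+d})$ are of left/right pure difference~$d$; for $d\in\{0,1,\ldots,2\ell\}$, arcs $(x_i,y_{i+d})$ and $(y_{i+d},x_i)$ are of left/right mixed difference~$d$; and arcs incident to $u_\infty$ are labeled left/right $\pm\infty$. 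The relevant permutation is $\rho=(x_0\,x_1\,\ldots\,x_{2\ell})(y_0\,y_1\,\ldots\,y_{2\ell})(u_\infty)$, of order $2\ell+1$.

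Let $x_j y_j$ and $x_k y_k$ denote the (unique) edges of mixed difference~$0$ in $F_1$ and $F_2$, respectively. I build $F_1'$ from $F_1$ by replacing every edge $uv\neq x_j y_j$ with the directed $2$-cycle $u\,v\,u$ and replacing $x_j y_j$ with the directed $3$-cycle $x_j\,y_j\,u_\infty\,x_j$; similarly I build $F_2'$ from $F_2$, but using the \emph{oppositely oriented} $3$-cycle $y_k\,x_k\,u_\infty\,y_k$ on $x_k y_k$. By construction, both $F_1'$ and $F_2'$ are $(\vec{C}_2,\ldots,\vec{C}_2,\vec{C}_3)$-factors of $K^*_{4\ell+3}$.

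The key bookkeeping step is to verify that $F_1'\cup F_2'$ contains exactly one arc of each difference. Each $2$-cycle coming from a non-special pure edge of length~$d$ contributes arcs of pure differences $d$ and $-d$ (on the appropriate side); each $2$-cycle from a mixed edge of difference~$d\neq 0$ contributes one left-mixed~$d$ arc and one right-mixed~$d$ arc; combined with hypothesis~(i), this accounts for every nonzero mixed and every pure difference. The $3$-cycle $x_j\,y_j\,u_\infty\,x_j$ picks up precisely the three arcs of left-mixed~$0$, right~$\infty$, and left~$-\infty$, while its ``transpose'' $y_k\,x_k\,u_\infty\,y_k$ picks up the complementary three arcs of right-mixed~$0$, left~$\infty$, and right~$-\infty$. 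Since $\rho$ preserves differences and acts as a single $(2\ell+1)$-cycle on the arcs of any fixed difference class, the two orbits $\{\rho^i(F_1'):i\in\ZZ_{2\ell+1}\}$ and $\{\rho^i(F_2'):i\in\ZZ_{2\ell+1}\}$ are arc-disjoint and together cover all of $K^*_{4\ell+3}$, giving the desired factorization.

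The only genuine insight is the choice of \emph{opposite} orientations for the two $3$-cycles: a single $3$-cycle through $u_\infty$ on a mixed-$0$ edge can cover only half of the six ``missing'' difference classes (one mixed-$0$ arc together with two $\pm\infty$ arcs on one side), so hypothesis~(ii)---providing a second mixed-$0$ edge in $F_2$ that can be oriented the other way---is exactly what closes the proof. Beyond this observation, the argument is a routine adaptation of Lemma~\ref{lmm:k_{4ell}}.
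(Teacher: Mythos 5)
Your proposal is correct and follows essentially the same route as the paper: view $K^*_{4\ell+3}$ as $K^*_{4\ell+2}\bowtie K^*_1$, replace each non-special edge of $F_1,F_2$ by a directed $2$-cycle and the two mixed-difference-$0$ edges by \emph{oppositely oriented} directed $3$-cycles through $u_\infty$, check that $F_1'\cup F_2'$ covers each difference class exactly once, and rotate by $\rho$. The difference bookkeeping (left-mixed~$0$, right~$\infty$, left~$-\infty$ from one $3$-cycle and the complementary three classes from the other) matches the paper's verification exactly.
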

\begin{proof}
View $K_{4\ell+3}^*$ as the join $K_{4\ell+2}^* \bowtie K_1^*$, where the vertex set of $K_1^*$ is $\{ u_{\infty} \}$, and $V(K_{4\ell+2}^*)=V(K_{4\ell+2})$. Arcs of the forms $(x_i, x_{i+d})$ and $(y_i, y_{i+d})$, for $d=1,..., 2\ell,$ will be called arcs of left and right, respectively, pure difference $d$. Arcs of the forms $(x_i, y_{i+d})$ and $(y_{i+d}, x_{i})$, for $d=0, 1,..., 2\ell,$ will be called arcs of left and right, respectively, mixed difference $d$. Also arcs of the forms $(x_i,u_{\infty})$ and $(y_i,u_{\infty})$ will be called of left and right, respectively, difference $\infty$, and arcs of the forms $(u_{\infty},x_i)$ and $(u_{\infty},y_i)$ will be called of left and right, respectively, difference $-\infty$. Define the permutation $\rho=(x_0 x_1...x_{2\ell})(y_0 y_1...y_{2\ell})(u_{\infty}).$

Now, assume that $F_1$ and $F_2$ are 1-factors of $K_{4\ell+2}$ which satisfy both of the assumptions $(i)$ and $(ii)$. 
Let $F_{1}^{'}$ and $F_{2}^{'}$ be obtained from $F_1$ and $F_2$ respectively, by replacing each edge $uv$, except the edges of mixed difference $0$, by the directed 2-cycle $u\, v\, u$, replacing 
one edge of mixed difference $0$ by the directed 3-cycle $u\, v\, u_{\infty}\, u$, and the other by the directed 3-cycle $v\, u\, u_{\infty}\, v$. Since $F_1$ and $F_2$ are 1-factors of $K_{4\ell+2}$, and each of $F_1$ and $F_2$ contains exactly one edge of mixed difference $0$, it is clear that $F^{'}_1$ and $F^{'}_2$ are $(\vec{C}_2,...,\vec{C}_2, \vec{C}_3)$-factors of $K^{*}_{4\ell+2}$. Morever, observe that each edge of pure length $d$ in $K_{4\ell+2}$ was replaced by two opposite arcs with the same endpoints; that is, arcs of pure differences $d$ and $-d$. Similarly, each edge of mixed difference $d$, where $d\neq 0$, was replaced by two opposite arcs with the same endpoints; that is, arcs of left and right mixed difference $d$.

 Finally, one edge of mixed difference $0$ gave rise to one arc of left mixed difference $0$, one arc of right difference $\infty$, and one arc of left difference $-\infty$, while the other edge of mixed difference $0$ gave rise to one arc of right mixed difference $0$, one arc of left difference $\infty$, and one arc of right difference $-\infty$.
Thus, jointly, $F_{1}^{'}$ and $F_{2}^{'}$ contain exactly one arc of each left and right pure difference in the set
$$\Big\{ 1, ..., 2\ell \Big\},$$
 exactly one arc of each left and right mixed difference in the set
$$\Big\{ 0,  1, ..., 2\ell \Big\},$$
and exactly one arc of each left and right difference $\pm\infty.$

From the properties of $F_{1}^{'}$ and $F_{2}^{'}$ we conclude that
$$\Big\{ \rho^i(F_{1}^{'}): i \in \ZZ_{2\ell+1} \Big\}\cup \Big\{ \rho^i(F_{2}^{'}): i \in \ZZ_{2\ell+1} \Big\}$$
is a  $(\vec{C}_2,...,\vec{C}_2, \vec{C}_{3})$-factorization of $K_{4\ell+1}^{*}$.
\hfill
\end{proof}


\begin{theorem}\label{thm 1,3,7}
If $n\geq 5$ and $n \equiv 1, 3,$ or $7 \pmod {8}$, then $K^{*}_{n}$ admits a $(\vec{C}_2,...,\vec{C}_2, \vec{C}_3)$-factorization.
\end{theorem}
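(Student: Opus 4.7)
The plan is to handle the three residue classes separately by reducing each to Lemma \ref{lmm:k_{4ell}} or Lemma \ref{lmm:k_{4ell+2}}. Writing $n = 4\ell+1$ when $n \equiv 1 \pmod{8}$ forces $\ell$ to be even, putting us in the setting of Lemma \ref{lmm:k_{4ell}}; writing $n = 4\ell+3$ when $n \equiv 3 \pmod{8}$ forces $\ell$ even, and when $n \equiv 7 \pmod{8}$ forces $\ell$ odd, both falling under Lemma \ref{lmm:k_{4ell+2}}. In each case it suffices to exhibit a pair of 1-factors $F_1, F_2$ of $K_{4\ell}$ or $K_{4\ell+2}$ satisfying the hypotheses of the relevant lemma.

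A vertex-count already constrains the shape of the construction: every pure left edge uses two $x$-vertices and every pure right edge uses two $y$-vertices, while every mixed edge uses one of each, so each $F_i$ must contain the same number of pure-left as pure-right edges. Combined with hypotheses (i) and (ii) of Lemma \ref{lmm:k_{4ell}}, this forces one of $F_1, F_2$ to contain the single edge of left pure length $\ell$ and the other to contain the single edge of right pure length $\ell$; similarly, for Lemma \ref{lmm:k_{4ell+2}} each $F_i$ must contain exactly one edge of mixed difference $0$. I would pin these distinguished edges down first (for example, $x_0 x_\ell \in F_1$ and $y_0 y_\ell \in F_2$ in the first setting, and $x_0 y_0 \in F_1$, $x_j y_j \in F_2$ for an appropriate $j$ in the second), then fill out each $F_i$ with a small number of pure edges covering some pure lengths together with a "zigzag" mixed matching of the form $x_i \mapsto y_{c-i}$ on the remaining indices, with the shift $c$ chosen so that the mixed differences realized form exactly the prescribed residues.

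The main obstacle is to show that the zigzag permutations and the distribution of the smaller pure lengths between $F_1$ and $F_2$ can be chosen simultaneously so that (a) each $F_i$ remains a matching and (b) jointly $F_1 \cup F_2$ realizes every pure length in $\{1,\ldots,\ell\}$ (both left and right) and every mixed difference in $\ZZ_{2\ell}$ or $\ZZ_{2\ell+1}$ exactly once. This amounts to a Skolem-type difference-partition problem in $\ZZ_k$; its feasibility is parity-sensitive in $\ell$, which accounts for the fact that only three of the four odd residues modulo $8$ appear in the statement. Once a parametrized construction is produced, verification is a direct listing of the differences contributed by each edge in $F_1$ and $F_2$. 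I expect a handful of small base values (such as $n = 7, 9, 11$) to fall outside the general parametric family and to require ad hoc 1-factors, which can be exhibited directly before presenting the general construction.
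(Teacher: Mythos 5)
Your framework is exactly the one the paper uses: the same reduction of each residue class to Lemma \ref{lmm:k_{4ell}} (for $n\equiv 1 \pmod 8$, $\ell$ even) or Lemma \ref{lmm:k_{4ell+2}} (for $n\equiv 3,7\pmod 8$, $\ell$ even or odd), the same observation that each $F_i$ must contain equally many pure-left and pure-right edges so that the distinguished edges of pure length $\ell$ (respectively of mixed difference $0$) must be split one per factor, and the same general shape of construction --- a few pure edges plus ``zigzag'' mixed matchings $x_i\mapsto y_{c-i}$. Your parity remark correctly explains why $n\equiv 5\pmod 8$ is excluded (this is the content of Lemma \ref{lmm:ell is odd }).

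However, there is a genuine gap: you never actually produce the 1-factors. You write that ``the main obstacle is to show that the zigzag permutations and the distribution of the smaller pure lengths between $F_1$ and $F_2$ can be chosen simultaneously'' and that ``once a parametrized construction is produced, verification is a direct listing'' --- but that construction \emph{is} the proof. The feasibility of this Skolem-type difference partition is not automatic and cannot be waved through: the paper needs three separate explicit families (one per residue class), each with its own choice of index ranges for the sets $A_1,\dots,A_3$ and $B_1,\dots,B_5$, a further split on $\ell \bmod 4$ within the $n\equiv 3$ and $n\equiv 7$ cases, and several ad hoc small cases ($\ell=2,4,6$ in one branch, $\ell=1,3,5$ in another) where the general formulas break down. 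Verifying that each candidate set is simultaneously a perfect matching \emph{and} realizes exactly the prescribed multiset of lengths and differences is where all the work lies. Until you exhibit the explicit edge sets and check both properties for every $\ell$ in each class, the statement is not proved.
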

\begin{proof}
It suffices to show that $K_{n-1}$ has 1-factors $F_1$ and $F_2$ that satisfy the assumptions of Lemma \ref{lmm:k_{4ell}} or \ref{lmm:k_{4ell+2}}. Then from Lemmas \ref{lmm:k_{4ell}} and \ref{lmm:k_{4ell+2}}, we conclude that $K^{*}_{n}$ admits a $(\vec{C}_2,...,\vec{C}_2, \vec{C}_3)$-factorization. Label the vertices of $K_{n-1}$ and for any $S\subseteq E(K_{n-1})$, define sets $\mathcal{L}(S), \mathcal{R}(S), \mathcal{M}(S), \mathcal{X}(S),$ and $\mathcal{Y}(S)$ as in Notation \ref{note1}. We have the following three cases.

{\sc Case $n \equiv 1 \pmod{8}$.} So $n=4\ell+1$ for an even integer $\ell$. We show that $K_{4\ell}$ has 1-factors $F_1$ and $F_2$ that satisfy the assumptions in Lemma \ref{lmm:k_{4ell}}. 

Consider the following subsets of the edge set of $K_{4\ell}$.
\begin{eqnarray*}
A_1&=& \{x_{i}y_{\ell-i}: i=1, 2, ..., \ell-1\},\\
A_2&=& \{x_{i}y_{\ell-1-i}: i=\ell+1, \ell+2, ..., 2\ell-2\},\quad \mbox{and}\\
A_3&=&\{x_0x_{2\ell-1}, y_{0}y_{\ell}, x_{\ell}y_{2\ell-1} \}.
\end{eqnarray*}
Let $A=A_1\cup A_2\cup A_3$. Observe that
\begin{eqnarray*}
\mathcal{X}(A_1)&=&\Big\{1, 2, ..., \ell-1 \Big\},\\
\mathcal{X}(A_2)&=&\Big\{\ell+1, ..., 2\ell-2 \Big\}, \\
\mathcal{X}(A_3)&=&\Big\{0, \ell, 2\ell-1 \Big\},\\
\mathcal{Y}(A_1)&=&\Big\{1, 2, ..., \ell-1 \Big\},\\
\mathcal{Y}(A_2)&=&\Big\{\ell+1, ..., 2\ell-2 \Big\}, \\
\mathcal{Y}(A_3)&=&\Big\{0, \ell, 2\ell-1 \Big\}.
\end{eqnarray*}
 Hence, $\mathcal{X}(A)=\mathcal{Y}(A)=\ZZ_{2\ell}$ and $A$ covers each vertex of $K_{4\ell}$ exactly once. Thus, $A$ is a perfect matching, and so the subgraph $F_1=(V, A)$ is a 1-factor of $K_{4\ell}$.

Observe also that the sets $\mathcal{R}(A_1), \mathcal{R}(A_2), \mathcal{L}(A_1), $ and $\mathcal{L}(A_2)$ are all empty, while 
\begin{eqnarray*}
\mathcal{M}(A_1)&=&\Big\{ 0, 2, 4, ..., \ell-2, \ell+2, \ell+4, ..., 2\ell-2\Big\},\\
\mathcal{M}(A_2)&=&\Big\{1, 3, ..., \ell-3, \ell+3, \ell+5, ..., 2\ell-1 \Big\}.\\
\end{eqnarray*}
And also for the set $A_3$ we have 
\begin{eqnarray*}
\mathcal{M}(A_3)&=&\Big\{ \ell-1\Big\},\\
\mathcal{R}(A_3)&=&\Big\{\ell \Big\},\\
\mathcal{L}(A_3)&=&\Big\{ 1\Big\}.
\end{eqnarray*}
Therefore, $F_1$ is a 1-factor of $K_{4\ell}$ that contains exactly one edge of each left pure length in  
$$\mathcal{L}(A)= \Big\{1 \Big\},$$
exactly one edge of each right pure length in
$$\mathcal{R}(A)= \Big\{\ell \Big\},$$
and exactly one edge of each mixed difference in 
$$\mathcal{M}(A)= \Big\{ 0, 1, 2, ..., \ell-1, \ell+2, \ell+3, ..., 2\ell-1\Big\}.$$
 
Next, we construct the second 1-factor.
Consider the following subsets of the edge set of $K_{4\ell}$.
\begin{eqnarray*}
B_1&=&\Big\{x_{i}x_{2\ell-i}:i= 1, 2, ...,\frac{\ell}{2}-1\Big\},\\ 
B_2&= &\Big\{x_{i}x_{2\ell-i-1}: i=\frac{\ell}{2}, \frac{\ell}{2}+1, ...,\ell-2\Big\}, \\
B_3&=&\Big\{y_{i}y_{\ell-i}: i=1, 2, ...,\frac{\ell}{2}-1\Big\},\\
B_4&=& \Big\{y_{i}y_{\ell-1-i}: i=\ell, \ell+1, ..., \frac{3\ell}{2}-1\Big\},\\
B_5&=& \Big\{x_{0}x_{\ell}, x_{\ell-1}y_{0}, x_{\frac{3\ell}{2}}y_{\frac{\ell}{2}}\Big\}.
\end{eqnarray*}
 Let $B=B_1\cup B_2\cup B_3\cup B_4\cup B_5$. Observe that the sets $\mathcal{X}(B_3),\mathcal{X}(B_4), \mathcal{Y}(B_1),$ and $\mathcal{Y}(B_2)$ are all empty, whereas
\begin{eqnarray*}
\mathcal{X}(B_1)&=&\Big\{ 1, 2, ..., \frac{\ell}{2}-1, \frac{3\ell}{2}+1, ..., 2\ell-1\Big\},\\
\mathcal{X}(B_2)&=&\Big\{\frac{\ell}{2}, \frac{\ell}{2}+1, ..., \ell-2, \ell+1,..., \frac{3\ell}{2}-1 \Big\},\\
\mathcal{Y}(B_3)&=&\Big\{ 1, 2, ..., \frac{\ell}{2}-1,\frac{\ell}{2}+1, ..., \ell-1 \Big\},\\
\mathcal{Y}(B_4)&=&\Big\{ \ell, \ell+1, ..., 2\ell-1\Big\},\\
 \mathcal{X}(B_5)&=&\Big\{ 0, \ell-1, \ell, \frac{3\ell}{2}\Big\},\\
 \mathcal{Y}(B_5)&=&\Big\{0, \frac{\ell}{2} \Big\}.
 \end{eqnarray*}
Hence, $ \mathcal{X}(B)= \mathcal{Y}(B)=\ZZ_{2\ell}$, and $B$ covers each vertex of $K_{4\ell}$ exactly once. Thus, $B$ is perfect matching, and so the subgraph $F_2=(V, B)$ is a 1-factor of $K_{4\ell}$.

Also observe that we have 
\begin{eqnarray*}
\mathcal{L}(B_1)&=&\Big\{2, 4 ..., \ell-2 \Big\},\\
\mathcal{L}(B_2)&=&\Big\{ 3, 5 ..., \ell-1 \Big\},\\
\mathcal{R}(B_3)&=&\Big\{ 2, 4 ..., \ell-2 \Big\},\\
\mathcal{R}(B_4)&=&\Big\{ 1,3 ..., \ell-1 \Big\},\\
\mathcal{L}(B_5)&=&\Big\{\ell \Big\},\\
\mathcal{M}(B_5)&=&\Big\{ \ell, \ell+1\Big\},
\end{eqnarray*}
whereas $\mathcal{R}(B_1),\mathcal{R}(B_2),\mathcal{R}(B_5),\mathcal{L}(B_3),\mathcal{L}(B_4),\mathcal{M}(B_1), \mathcal{M}(B_2),\mathcal{M}(B_3),$ and $ \mathcal{M}(B_4)$ are all empty.

Therefore, $F_2$ is a 1-factor of $K_{4\ell}$ that contains exactly one edge of each left pure length in  
$$\mathcal{L}(B)= \Big\{2, 3, ..., \ell \Big\},$$
exactly one edge of each right pure length in
$$\mathcal{R}(B)= \Big\{1, 2, ...,\ell-1 \Big\},$$
and exactly one edge of each mixed difference in 
$$\mathcal{M}(B)= \Big\{ \ell, \ell+1\Big\}.$$
It follows that $F_1$ and $F_2$ jointly contain exactly one edge of each left and right pure length in 
$$\Big\{1, 2,..., \ell \Big\}$$
and exactly one edge of each mixed difference in
$$\Big\{0,1, 2,..., 2\ell-1 \Big\}.$$
 Observe also that $F_1$ and $F_2$ each contain exactly one edge of pure length $\ell$.
Thus, $F_{1}$ and $F_{2}$ are 1-factors of $K_{4\ell}$ that satisfy the assumptions in Lemma \ref{lmm:k_{4ell}}. 
For an example, the 1-factors $F_1$ and $F_2$ of $K_{24}$ are illustrated  in Figure \ref{fff5}.

\begin{figure}[h!]
\begin{center}
\centerline{\includegraphics[scale=.68]{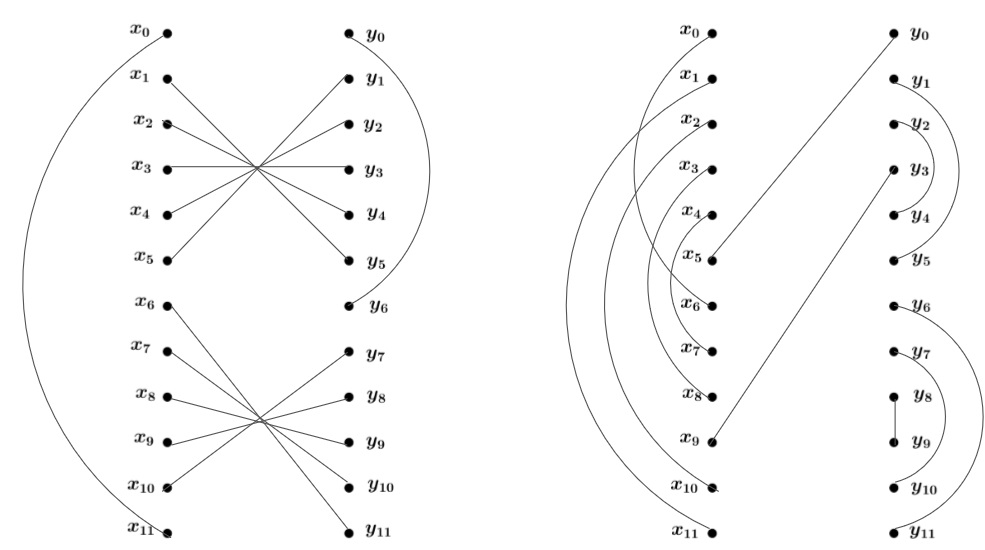}}
\caption{1-factors $F_1$ and $F_2$ of $K_{24}$}.
\label{fff5}
\end{center}
\end{figure}

{\sc Case $n \equiv 3 \pmod{8}$.} So $n=4\ell+3$ for an even integer $\ell$. In this case, we show $K_{4\ell+2}$ has two 1-factors that satisfy the assumptions in Lemma \ref{lmm:k_{4ell+2}}.

Consider the following subsets of the edge set of $K_{4\ell+2}$. 
\begin{eqnarray*}
A_1 &= &\Big\{x_{i}y_{\ell-i}: i=1, 2, ...,\frac{\ell}{2}-1,\frac{\ell}{2}+1, ..., \ell-1\Big\},\\
A_2 &= & \Big\{x_{i}y_{3\ell+1-i}: i=\ell+1, \ell+2, ..., 2\ell\Big\}, \quad \mbox{and} \\
A_3 &= &\Big\{x_0y_0, x_{\frac{\ell}{2}}x_{\ell}, y_{\frac{\ell}{2}}y_{\ell}\Big\}.
\end{eqnarray*}
Let $A=A_1\cup A_2 \cup A_3$. Observe that 
\begin{eqnarray*}
\mathcal{X}(A_1)&=&\Big\{ 1, 2, ...\frac{\ell}{2}-1,\frac{\ell}{2}+1,..., \ell-1 \Big\},\\
\mathcal{X}(A_2)&=&\Big\{ \ell+1,\ell+2, ..., 2\ell \Big\},\\
\mathcal{X}(A_3)&=&\Big\{0, \frac{\ell}{2}, \ell \Big\},\\
\mathcal{Y}(A_1)&=&\Big\{1, 2, ...,\frac{\ell}{2}-1,\frac{\ell}{2}+1,..., \ell-1\Big\},\\
\mathcal{Y}(A_2)&=&\Big\{ \ell+1,\ell+2, ..., 2\ell \Big\}, \mbox{and}\\
\mathcal{Y}(A_3)&=&\Big\{0, \frac{\ell}{2}, \ell \Big\}.
\end{eqnarray*}
 Thus, $\mathcal{X}(A)=\mathcal{Y}(A)=\ZZ_{2\ell+1}$.
Hence, observe that $A$ covers exactly once each vertex $x_i$ and $y_i$ of $K_{4\ell+2}$. Therefore, the subgraph $F_1=(V, A)$ is a 1-factor of $K_{4\ell+2}$.
 
Also observe that for the sets $A_1$, $A_2$, and $A_3$ we have 
\begin{eqnarray*}
\mathcal{M}(A_1)&=&\Big\{2, 4, ..., \ell-4, \ell-2, \ell+3, \ell+5, ..., 2\ell-3, 2\ell-1\Big\},\\
\mathcal{M}(A_2)&=&\Big\{1, 3, ..., \ell-3,\ell-1, \ell+2,\ell+4,..., 2\ell-2, 2\ell\Big\},\\
\mathcal{M}(A_3)&=&\Big\{0\Big\},\\
\mathcal{L}(A_3)&=&\Big\{\frac{\ell}{2}\Big\},\\
\mathcal{R}(A_3)&= &\Big\{\frac{\ell}{2}\Big\},
\end{eqnarray*}
whereas $\mathcal{L}(A_1), \mathcal{R}(A_1),\mathcal{L}(A_2),$ and $ \mathcal{R}(A_2)$ are all empty. Therefore,
\begin{align*}\label{EQS 1}
\mathcal{L}(A)&=\Big\{\frac{\ell}{2}\Big\},\\
\mathcal{R}(A)&=\Big\{\frac{\ell}{2}\Big\},\tag{1}\\
\mathcal{M}(A)&= \ZZ_{2\ell+1}-\Big\{\ell, \ell+1\Big\}.
\end{align*}
Next, we construct the second 1-factor. First, we define

\begin{equation*}
j=\left\{ \begin{array}{ll}
\frac{\ell-10}{4} & \quad\quad \mbox{if $\ell \equiv 2 \pmod{4}$};\\
  \frac{3\ell-8}{4} & \quad\quad \mbox{if $\ell \equiv 0 \pmod{4}$}.\end{array} \right.
\end{equation*}

 Consider the following subsets of the edge set of $K_{4\ell+2}$. 
\begin{eqnarray*}
B_1&=&\Big\{y_{i}y_{2\ell-2-i}: i=0, 1, ..., j \Big\},\\
B_2&=&\Big\{y_{i}y_{2\ell-i}: i= j+3, j+4, ..., \ell-1\Big\} ,\\
B_3&=&\Big\{x_{i}x_{2\ell-2-i}: i=0, 1, ..., j\Big\},\\
B_4&=&\Big\{x_{i}x_{2\ell-i}: i= j+3, j+4, ..., \ell-1\Big\}, \quad\mbox{and}\\
B_5&= &\Big\{x_{j+1}x_{j+2}, y_{j+1}y_{j+2}, x_{2\ell-1}y_{2\ell-1}, x_{\ell}y_{2\ell}, x_{2\ell}y_{\ell}\Big\}.
\end{eqnarray*}
{\sc Subcase $\ell\geq 8$.} Let $B=B_1\cup B_2\cup B_3 \cup B_4 \cup B_5$. Observe that $\mathcal{X}(B_1), \mathcal{X}(B_2), \mathcal{Y}(B_3)$, and $\mathcal{Y}(B_4)$ are all empty, while 
\begin{eqnarray*}
 \mathcal{Y}(B_1)&=&\Big\{0, 1, ..., j, 2\ell-j-2, 2\ell-j-1, ...,2\ell-2 \Big\},\\
 \mathcal{Y}(B_2)&=&\Big\{ j+3, j+4, ..., \ell-1, \ell+1, ...,2\ell-j-3 \Big\},\\
 \mathcal{X}(B_3)&=&\Big\{0, 1, ..., j, 2\ell-j-2, 2\ell-j-1, ...,2\ell-2 \Big\},\\
\mathcal{X}(B_4)&=&\Big\{ j+3, j+4, ..., \ell-1, \ell+1, ...,2\ell-j-3 \Big\},\\
\mathcal{X}(B_5)&=&\Big\{ j+1, j+2, \ell, 2\ell-1, 2\ell\Big\},\\
\mathcal{Y}(B_5)&=&\Big\{ j+1, j+2, \ell, 2\ell-1, 2\ell\Big\}.
\end{eqnarray*}
 Since $\ell\geq 8$, we have $0\leq j \leq \ell-4$. Consequently, $\mathcal{X}(B)=\mathcal{Y}(B)= \ZZ_{2\ell+1}$, that is, $B$ covers exactly once each vertex $x_i$ and $y_i$ of $K_{4\ell+2}$. Thus, $B$ is a perfect maching, and the subgraph $F_2=(V, B)$ is a 1-factor of $K_{4\ell+2}$. 

Furthermore, observe that if $\ell \equiv 2 \pmod{4}$, then 
\begin{eqnarray*}
\mathcal{R}(B_1)&=&\Big\{3, 5, ..., \frac{\ell}{2}-2\Big\},\\
\mathcal{R}(B_2)&=&\Big\{2, 4, ..., \ell\Big\}\cup \Big\{\frac{\ell}{2}+2, \frac{\ell}{2}+4, ..., \ell-1 \Big\},\\
\mathcal{L}(B_3)&=&\Big\{3, 5, ..., \frac{\ell}{2}-2\Big\},\\
\mathcal{L}(B_4)&=&\Big\{2, 4, ..., \ell\Big\}\cup \Big\{\frac{\ell}{2}+2, \frac{\ell}{2}+4, ..., \ell-1 \Big\},
\end{eqnarray*}
whereas $\mathcal{L}(B_1), \mathcal{L}(B_2), \mathcal{R}(B_3), \mathcal{R}(B_4),\mathcal{M}(B_1), \mathcal{M}(B_2), \mathcal{M}(B_3),$ and $ \mathcal{M}(B_4) $ are all empty.

And if $\ell \equiv 0 \pmod{4}$, then 
\begin{eqnarray*}
 \mathcal{R}(B_1)&=&\Big\{3, 5, ..., \ell-1\Big\}\cup \Big\{\frac{\ell}{2}+2, \frac{\ell}{2}+4, ..., \ell \Big\},\\
 \mathcal{R}(B_2)&=&\Big\{2, 4, ..., \frac{\ell}{2}-2\Big\},\\
 \mathcal{L}(B_3)&=&\Big\{3, 5, ..., \ell-1\Big\}\cup \Big\{\frac{\ell}{2}+2, \frac{\ell}{2}+4, ..., \ell \Big\},\\
\mathcal{L}(B_4)&=&\Big\{2, 4, ..., \frac{\ell}{2}-2\Big\},
\end{eqnarray*}
whereas $\mathcal{L}(B_1),\mathcal{L}(B_2), \mathcal{R}(B_3), \mathcal{R}(B_4),\mathcal{M}(B_1), \mathcal{M}(B_2),
\mathcal{M}(B_3),$ and $\mathcal{M}(B_4)$ are all empty.

Also observe that in both cases
\begin{eqnarray*}
\mathcal{R}(B_5)=\mathcal{L}(B_5) =\Big\{1\Big\},\\
\mathcal{M}(B_5)=\Big\{0, \ell, \ell+1\Big\}.
\end{eqnarray*}
Thus, $F_2=(V, B)$ is a 1-factor of $K_{4\ell+2}$ with
\begin{align*}\label{EQS 2}
\mathcal{L}(B) &=L- \Big\{\frac{\ell}{2}\Big\},\\
\mathcal{R}(B) &=L-\Big\{\frac{\ell}{2}\Big\},\tag{2}\\
\mathcal{M}(B) &=L-\Big\{ 0, \ell, \ell+1\Big\}.
\end{align*}

Next, suppose that $\ell<8$. Note that in this case $\ell\in \{2, 4, 6\}$.

{\sc Subcase $\ell=6$}. Let $B=B_2\cup B_4\cup B_5$. Observe that we have $\mathcal{X}(B_2)=\mathcal{Y}(B_4)=\emptyset$, whereas 
\begin{eqnarray*}
\mathcal{X}(B_4)=&\mathcal{Y}(B_2)&=\Big\{ 2, 3, 4, 5, 7, 8, 9, 10\Big\},\\
\mathcal{X}(B_5)=&\mathcal{Y}(B_5)&=\Big\{ 0, 1, 6, 11, 12\Big\}.
\end{eqnarray*}
Hence, $\mathcal{X}(B)=\mathcal{Y}(B)=\ZZ_{13}$, and $F_2=(V, B)$ is a 1-factor of $K_{26}$.

Furthermore, observe that $\mathcal{L}(B_2), \mathcal{R}(B_4),\mathcal{M}(B_2)$ and $\mathcal{M}(B_4)$ are all empty, whereas
\begin{eqnarray*}
\mathcal{R}(B_2)=\mathcal{L}(B_4)&=&\Big\{2, 4, 5, 6\Big\},\\
\mathcal{R}(B_5)=\mathcal{L}(B_5) &=&\Big\{1\Big\},\quad \mbox{and}\\
\mathcal{M}(B_5)&=&\Big\{0, 6, 7\Big\}.
\end{eqnarray*}
Cosequently, \eqref{EQS 2} holds in this case as well.

{\sc Subcase $\ell=4$}. Let $B=B_1\cup B_3\cup B_5$. We have 
\begin{eqnarray*}
\mathcal{Y}(B_1)=\mathcal{X}(B_3)=\Big\{0, 1, 5, 6 \Big\},\\
\mathcal{Y}(B_5)=\mathcal{X}(B_5)=\Big\{2, 3, 4, 7, 8 \Big\},
\end{eqnarray*}
whereas $\mathcal{Y}(B_3)=\mathcal{X}(B_1)=\emptyset$.

Clearly, $\mathcal{X}(B)=\mathcal{Y}(B)=\ZZ_{9}$. Thus, $F_2=(V, B)$ is a 1-factor of $K_{18}$.

Also, observe that $\mathcal{R}(B_3),\mathcal{L}(B_1),\mathcal{M}(B_1)$ and $\mathcal{M}(B_3) $ are all empty, while
\begin{eqnarray*}
\mathcal{R}(B_1)=\mathcal{L}(B_3)=\Big\{3,4\Big\},\\
\mathcal{R}(B_5)=\mathcal{L}(B_5) =\Big\{1\Big\},\\
\mathcal{M}(B_5)=\Big\{0, 4, 5\Big\}.
\end{eqnarray*}
Consequently, \eqref{EQS 2} holds in this case, too.

{\sc Subcase $\ell=2$.} In this case let 
$$B= \Big\{ x_0x_2, y_0y_2, x_1y_3, x_3y_1, x_4y_4  \Big\}.$$
It is easy to verify that $F_2=(V, B)$ is a 1-factor of $K_{10}$ and satisfies \eqref{EQS 2}.

From \eqref{EQS 1} and \eqref{EQS 2} conclude that in all cases $F_1$ and $ F_2$ jointly contain exactly one edge of each left and right pure length in $L$, and exactly one edge of each mixed difference in $\ZZ_{2\ell+1}^{*}$.
Observe also that each of $F_1$ and $F_2$ contains exactly one edge of mixed difference $0$.
Thus, $F_{1}$ and $F_{2}$ are 1-factors of $K_{4\ell+2}$ that satisfy the assumptions in Lemma \ref{lmm:k_{4ell+2}}.
For an example, the 1-factors $F_1$ and $F_2$ of $K_{34}$ are illustrated  in Figure \ref{fff34}.

\begin{figure}[h!]
\begin{center}
\centerline{\includegraphics[scale=.7]{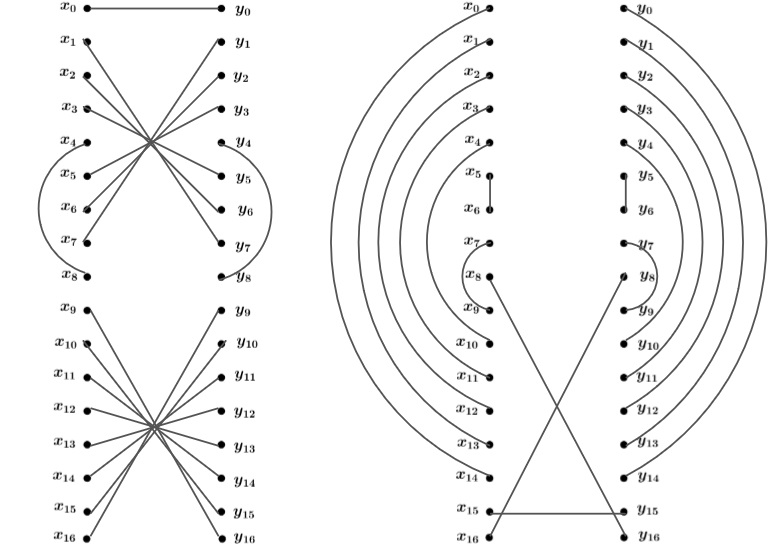}}
\caption{1-factors $F_1$ and $F_2$ of $K_{34}$}.
\label{fff34}
\end{center}
\end{figure}

{\sc Case $n \equiv 7 \pmod{8}$.} So $n=4\ell+3$ for an odd integer $\ell$. Again, we shall construct 1-factors $F_1$ and $F_2$ of $K_{4\ell+2}$ that satisfy the assumptions in Lemma \ref{lmm:k_{4ell+2}}.

First, assume that $\ell>1$. Define the subsets of the edge set of $K_{4\ell+2}$ as follows.
\begin{eqnarray*}
A_1&=&\Big\{x_iy_{\ell-i}: i=1, 2,..., \ell-1 \Big\},\\
A_2&=&\Big\{x_iy_{\ell-i}: i=\ell+1, \ell+2, ..., \frac{3\ell-1}{2}, \frac{3\ell+3}{2}, \frac{3\ell+5}{2}...,2\ell \Big\},\\
A_3&=&\Big\{x_0y_0, x_{\ell}x_{\frac{3\ell+1}{2}}, y_{\ell}y_{\frac{3\ell+1}{2}} \Big\}.
\end{eqnarray*}
 Let $A=A_1\cup A_2 \cup A_3$. Observe that
\begin{eqnarray*}
\mathcal{X}(A_1)=\mathcal{Y}(A_1)&=&\Big\{ 1, 2, ..., \ell-1\Big\},\\
\mathcal{X}(A_2)=\mathcal{Y}(A_2)&=&\Big\{ \ell+1, \ell+2, ...,\frac{3\ell-1}{2}, \frac{3\ell+3}{2}, \frac{3\ell+5}{2}...,2\ell \Big\},\\
\mathcal{X}(A_3)=\mathcal{Y}(A_3)&=&\Big\{ 0, \ell, \frac{3\ell+1}{2}\Big\}.
\end{eqnarray*}
Hence, $\mathcal{X}(A)=\mathcal{Y}(A)=\ZZ_{2\ell+1}$, and $A$ covers exactly once each vertex of $K_{4\ell+2}$. Therefore, $A$ is a perfect maching and $F_1=(V, A)$ is a 1-factor of $K_{4\ell+2}$.

Also observe that $\mathcal{L}(A_1), \mathcal{L}(A_2), \mathcal{R}(A_1), $ and $\mathcal{R}(A_2)$ are all empty, whereas 
\begin{eqnarray*}
\mathcal{M}(A_1)&=&\Big\{1, 3, ...,\ell-2, \ell+3, \ell+5, ..., 2\ell \Big\},\\
\mathcal{M}(A_2)&=&\Big\{2, 4, ...,\ell-1, \ell+2, \ell+4, ..., 2\ell -1\Big\},\\
\mathcal{M}(A_3)&=&\Big\{0\Big\},\\
\mathcal{L}(A_3)&=&\Big\{\frac{\ell+1}{2}\Big\},\\
\mathcal{R}(A_3)&=&\Big\{\frac{\ell+1}{2}\Big\}.
\end{eqnarray*}
Thus, $F_1=(V, A)$ is a 1-factor of $K_{4\ell+2}$ with the following properties
\begin{align*}\label{EQS3}
\mathcal{L}(A)&=\Big\{\frac{\ell+1}{2}\Big\},\\
\mathcal{R}(A)&=\Big\{\frac{\ell+1}{2}\Big\},\tag{3}\\
\mathcal{M}(A)&=\ZZ_{2\ell+1}-\Big\{\ell, \ell+1\Big\}.
\end{align*}
Now, we construct the second 1- factor. For this purpose, we consider two following subcases.

{\sc Subcase $\ell \equiv 1 \pmod{4}$.} 

Define the following subsets of the edge set of $K_{4\ell+2}$.
\begin{eqnarray*}
B_1&=&\Big\{ x_ix_{2\ell-2-i}: i=0, 1, ..., \frac{\ell-9}{4}\Big\},\\
B_2&=&\Big\{x_ix_{2\ell-i}: i=\frac{\ell+3}{4}, \frac{\ell+7}{4}..., \ell-1 \Big\},\\
B_3&=&\Big\{y_iy_{2\ell-2-i}: i=0, 1, ..., \frac{\ell-9}{4} \Big\},\\
B_4&=&\Big\{y_iy_{2\ell-i}: i=\frac{\ell+3}{4}, \frac{\ell+7}{4}..., \ell-1 \Big\},\mbox{\quad and}\\
B_5&=&\Big\{ x_{\frac{\ell-5}{4}} x_{\frac{\ell-1}{4}}, y_{\frac{\ell-5}{4}} y_{\frac{\ell-1}{4}}, x_{\ell}y_{2\ell}, x_{2\ell-1}y_{2\ell-1}, x_{2\ell}y_{\ell} \Big\}.
\end{eqnarray*} 
First, assume $\ell\geq 9$. Let $B=B_1\cup B_2\cup B_3\cup B_4 \cup B_5$. Observe that while $ \mathcal{X}(B_3),  \mathcal{X}(B_4),  \mathcal{Y}(B_1),$ and $ \mathcal{Y}(B_2)$ are all empty, we have
 \begin{eqnarray*}
 \mathcal{X}(B_1)= \mathcal{Y}(B_3)&=&\Big\{ 0, 1, ..., \frac{\ell-9}{4}, \frac{7\ell+1}{4}, \frac{7\ell+5}{4} ..., 2\ell-2 \Big\},\\
  \mathcal{X}(B_2)= \mathcal{Y}(B_4)&=&\Big\{ \frac{\ell+3}{4}, \frac{\ell+7}{4} ...,\ell-1, \ell+1, \ell+2...,\frac{7\ell-3}{4} \Big\},\\
 \mathcal{X}(B_5)= \mathcal{Y}(B_5)&=& \Big\{\frac{\ell-5}{4}, \frac{\ell-1}{4}, \ell, 2\ell-1, 2\ell \Big\}.
 \end{eqnarray*}
Hence, $\mathcal{X}(B)=\mathcal{Y}(B) =\ZZ_{2\ell+1}$, and $B$ covers each vertex of $K_{4\ell+2}$ exactly once.
Therefore, $F_2=(V, B)$ is a 1- factor of $K_{4\ell+2}$.

Observe also that while $\mathcal{R}(B_1), \mathcal{R}(B_2), \mathcal{L}(B_3), \mathcal{L}(B_4),\mathcal{M}(B_1),\mathcal{M}(B_2), \mathcal{M}(B_3)$ and $\mathcal{M}(B_4)$ are all empty, we have 
\begin{eqnarray*}
\mathcal{L}(B_1)=\mathcal{R}(B_3)&= &\Big \{ 3, 5, ..., \frac{\ell-3}{2}\Big\},\\
\mathcal{L}(B_2)=\mathcal{R}(B_4)&=&\Big \{ 2, 4, ..., \ell-1\Big\} \bigcup \Big\{\frac{\ell+5}{2},  \frac{\ell+9}{2}, ..., \ell \Big \},\\
\mathcal{L}(B_5)= \mathcal{R}(B_5)&=&\Big\{ 1\Big\}, \mbox{\quad and}\\
\mathcal{M}(B_5)&=&\Big\{ 0, \ell, \ell+1\Big\}.
\end{eqnarray*}
Therefore, $F_2=(V, B)$ is a 1-factor of $K_{4\ell+2}$ with 
\begin{align*}\label{EQA 4}
\mathcal{L}(B)&=L-\Big \{\frac{\ell+1}{2} \Big\},\\
\mathcal{R}(B)&=L-\Big \{\frac{\ell+1}{2} \Big\},\tag{4}\\
\mathcal{M}(B)&= \Big\{ 0, \ell, \ell+1\Big\}.
\end{align*}

Next, suppose that $\ell< 9$. In this case we have $\ell= 5$. Let $B= B_2\cup B_4 \cup B_5$. 

Observe that while  $\mathcal{Y}(B_2)$ and $\mathcal{X}(B_4)$ are empty, we have 
\begin{eqnarray*}
\mathcal{X}(B_2)= \mathcal{Y}(B_4)=\Big \{ 2, 3, 4, 6, 7, 8\Big\},\\
\mathcal{X}(B_5)= \mathcal{Y}(B_5)=\Big \{0, 1, 5, 9, 10\Big\}.
\end{eqnarray*}
Clearly, $F_2=(V, B)$ is a 1-factor of $K_{22}$. 

Also observe that 
\begin{eqnarray*}
\mathcal{L}(B_2)&= &\mathcal{R}(B_4)=\Big \{ 2, 4, 5\Big\},\\
\mathcal{L}(B_5)&=& \mathcal{R}(B_5)=\Big \{ 1\Big\},\\
\mathcal{M}(B_5)&=&\Big \{0, 5, 6\Big\},
\end{eqnarray*} 
while $\mathcal{R}(B_2), \mathcal{L}(B_4), \mathcal{M}(B_2), $ and $\mathcal{M}(B_4)$ are all empty. Hence, \eqref{EQA 4} holds in this case as well.

{\sc Subcase $\ell \equiv 3 \pmod{4}$.} 

First, assume $\ell\geq 7$. Consider the following subsets of the edge set of $K_{4\ell+2}$.
\begin{eqnarray*}
B_1&=&\Big\{x_ix_{2\ell-i}: i= 0, 1, ..., \frac{\ell-3}{4}, \frac{3\ell+3}{4}, \frac{3\ell+7}{4} ..., \ell-1 \Big\},\\
B_2&=&\Big\{ x_ix_{2\ell-i-1}: i=\frac{\ell+1}{4}, \frac{\ell+5}{4}..., \frac{3\ell-5}{4} \Big\},\\
B_3&=&\Big\{y_iy_{2\ell-i}: i= 0, 1, ..., \frac{\ell-3}{4}, \frac{3\ell+3}{4},\frac{3\ell+7}{4} ..., \ell-1 \Big\},\\
B_4&=&\Big\{y_iy_{2\ell-i-1}: i=\frac{\ell+1}{4},\frac{\ell+5}{4}..., \frac{3\ell-5}{4} \Big\},\\
B_5&=&\Big\{x_{\frac{3\ell-1}{4}}y_{\frac{7\ell-1}{4}}, x_{\ell}y_{\ell}, x_{\frac{7\ell-1}{4}}y_{\frac{3\ell-1}{4}} \Big\}.
\end{eqnarray*}
 Let $B= B_1\cup B_2\cup B_3\cup B_4\cup B_5$. Observe that $\mathcal{Y}(B_1), \mathcal{Y}(B_2), \mathcal{X}(B_3),$ and $\mathcal{X}(B_4)$ are all empty, whereas
\begin{eqnarray*}
\mathcal{X}(B_1)=\mathcal{Y}(B_3)&=& \Big\{0, 1, ..., \frac{\ell-3}{4}, \frac{3\ell+3}{4}, \frac{3\ell+7}{4} ..., \ell-1\Big\}\cup\\ && \Big\{ \ell+1,\ell+2, ..., \frac{5\ell-3}{4},\frac{7\ell+3}{4}, \frac{7\ell+7}{4} ..., 2\ell \Big\},\\
\vspace*{10mm}
\mathcal{X}(B_2)=\mathcal{Y}(B_4)&=&\Big\{\frac{\ell+1}{4},\frac{\ell+5}{4}..., \frac{3\ell-5}{4},  \frac{5\ell+1}{4}, \frac{5\ell+5}{4}..., \frac{7\ell-5}{4}\Big\},\\
\mathcal{X}(B_5)=\mathcal{Y}(B_5)&=&\Big\{\frac{3\ell-1}{4}, \ell, \frac{7\ell-1}{4} \Big\}.
\end{eqnarray*}
Thus, $\mathcal{X}(B)=\mathcal{Y}(B)=\ZZ_{2\ell+1}$, and $B $ covers each vertex of $K_{4\ell+2}$ exactly once. Therefore, $F_2=(V, B)$ is a 1-factor of $K_{4\ell+2}$.

Also observe that 
\begin{eqnarray*}
\mathcal{L}(B_1)&= & \mathcal{R}(B_3)= \Big\{1, 3, ..., \frac{\ell-1}{2}\Big\} \cup \Big\{2, 4, ..., \frac{\ell-3}{2} \Big\},\\
\mathcal{L}(B_2)&= & \mathcal{R}(B_4)=  \Big\{ \frac{\ell+5}{2}, \frac{\ell+9}{2}, ..., \ell-1\Big\} \cup \Big\{\frac{\ell+3}{2}, \frac{\ell+7}{2}, ..., \ell \Big\},\\
\mathcal{M}(B_5)&= & \Big\{ 0, \ell, \ell+1 \Big\},
\end{eqnarray*}
 while $\mathcal{R}(B_1), \mathcal{R}(B_2), \mathcal{L}(B_3), \mathcal{L}(B_4), \mathcal{L}(B_5), $ and $\mathcal{R}(B_5)$ are all empty.
 
 Hence, $F_2=(V, B)$ is a 1-factor of $K_{4\ell+2}$ satisfying \eqref{EQA 4}.
 
For $\ell< 7$, we have $\ell=3$. Let 
$$ B=\Big\{ x_0x_6, x_1 x_4, y_0 y_6, y_1y_4, x_2y_5, x_3y_3, x_5y_2  \Big\}.$$
It can be verified easily that $F_2=(V, B)$ is a 1-factor of $K_{15}$ that satisfies \eqref{EQA 4}. Hence in all cases with $\ell>1$, we have constructed 1-factors $F_1$ and $F_2$ that satisfy properties \eqref{EQS3} and \eqref{EQA 4}.

Finally, suppose that $\ell=1$. In this case let
\begin{eqnarray*}
A=\Big\{x_0y_0, x_1x_2, y_1y_2 \Big\},\\
B=\Big\{ x_0y_2, x_1y_1, x_2y_0\Big\}.\\
\end{eqnarray*}
It is easy to see that $F_1=(V, A)$ and $F_2=(V, B)$ are two 1-factors of $K_{7}$ with
\begin{eqnarray*}
\mathcal{L}(A)&=&\mathcal{R}(A)= \Big\{1 \Big\},\\
\mathcal{M}(A)&=&\Big\{0 \Big\},\\
\mathcal{M}(B)&=& \Big\{0, 1, 2 \Big\},
\end{eqnarray*}
while $\mathcal{L}(B)$ and $\mathcal{R}(B)$ are empty.

Consequently, in all cases $ F_1$ and $F_2$ jointly contain exactly one edge of each pure length in $L$, and exactly one edge of each mixed difference in $\ZZ^{*}_{2\ell+1}$, and also each of $ F_1$ and $F_2$ contains exactly one edge of mixed difference $0$. Thus, $F_1$ and $F_2$ are two 1-factors of $K_{4\ell+2}$ that satisfy the assumptions in Lemma \ref{lmm:k_{4ell+2}}.
For an example, the 1-factors $F_1$ and $F_2$ of $K_{30}$ are illustrated  in Figure \ref{fff30}.

\begin{figure}[h!]
\begin{center}
\centerline{\includegraphics[scale=.7]{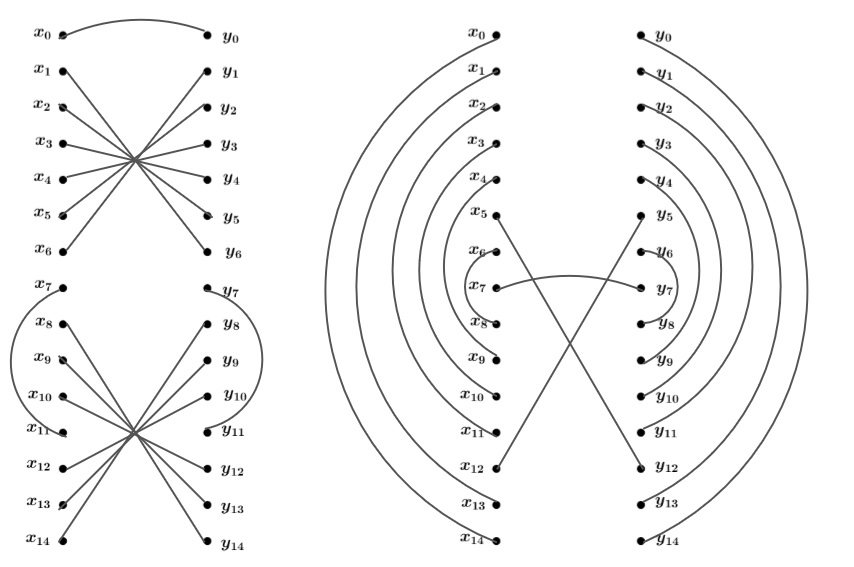}}
\caption{1-factors $F_1$ and $F_2$ of $K_{30}$}.
\label{fff30}
\end{center}
\end{figure}
\end{proof}
 
Observe that we have one missing case in Theorem \ref{thm 1,3,7}, namely, $ n \equiv 5 \pmod {8}$. In this case we have the following lemma, which shows that the construction used in the proof of Theorem \ref{thm 1,3,7} cannot be used in this case. However, we do not know whether or not $K_{n}^{*}$ admits a $(\vec{C}_2, ..., \vec{C}_2, \vec{C}_3)$-factorization in this case.


\begin{lem}\label{lmm:ell is odd }
If $n \equiv 5 \pmod {8}$, then $K_{n-1}$ does not have 1-factors $F_1$ and $F_2$ that satisfy the assumptions in Lemma \ref{lmm:k_{4ell}}.

\end{lem}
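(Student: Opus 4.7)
The plan is to use a parity-of-subscript argument. Since $n \equiv 5 \pmod{8}$, we have $n = 4\ell+1$ with $\ell$ odd; this is the only feature of $n$ that I will use beyond the size of $K_{n-1}=K_{4\ell}$. Partition the vertex set of $K_{4\ell}$ into the set $E$ of vertices $x_i, y_i$ with $i$ even and the set $O$ of vertices $x_i, y_i$ with $i$ odd; note that $|E|=|O|=2\ell$. I will call an edge \emph{cross} if it has one endpoint in $E$ and one in $O$. The key observation is that an edge of pure length $d$ is cross if and only if $d$ is odd, and likewise an edge of mixed difference $d$ is cross if and only if $d$ is odd, since in each case the two endpoints have subscripts differing by $d$.

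Next I would record the elementary parity constraint on any 1-factor $F$ of $K_{4\ell}$: if $c$ is the number of cross edges of $F$ and $a$ is the number of edges of $F$ contained in $O$, then the $2\ell$ vertices of $O$ are covered exactly once, giving $c + 2a = 2\ell$, hence $c$ is even. Consequently the total number of cross edges in $F_1 \cup F_2$ is even.

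On the other hand, assumption (i) of Lemma \ref{lmm:k_{4ell}} forces $F_1 \cup F_2$ to contain exactly one edge of each left pure length in $\{1,\ldots,\ell\}$, exactly one edge of each right pure length in $\{1,\ldots,\ell\}$, and exactly one edge of each mixed difference in $\ZZ_{2\ell}$. When $\ell$ is odd, the number of odd elements of $\{1,\ldots,\ell\}$ is $(\ell+1)/2$, and the number of odd elements of $\ZZ_{2\ell}$ is $\ell$. Therefore the total number of cross edges in $F_1 \cup F_2$ equals
$$2\cdot\frac{\ell+1}{2} + \ell = 2\ell+1,$$
which is odd. This contradicts the previous paragraph and completes the proof.

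There is essentially no obstacle here: once the cross/non-cross dichotomy is set up and one notices that condition (i) alone determines the total cross count jointly, the contradiction is immediate. Assumption (ii) plays no role in the argument, so the obstruction is actually stronger than stated, ruling out any pair of 1-factors satisfying (i) when $\ell$ is odd.
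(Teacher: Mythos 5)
Your proof is correct, and it rests on essentially the same parity obstruction as the paper's: both arguments exploit the fact that an edge of odd pure length or odd mixed difference joins vertices whose subscripts have opposite parity, and both derive a mod-2 contradiction from condition (i) alone. Your packaging — bipartitioning the vertices by subscript parity and counting cross edges in each 1-factor — reaches the same congruence more directly than the paper's bookkeeping with the parameters $\varepsilon^{o}_i, \varepsilon^{e}_i, \omega^{o}_i, \lambda_i, \rho_i$ and its four separate congruences, and your remark that assumption (ii) is never needed is consistent with the paper's proof, which likewise does not invoke it.
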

\begin{proof}
Let $ n \equiv 5 \pmod {8}$, so $n=4\ell+1$, where $\ell$ is an odd integer. Suppose, to the contrary, that there exist 1-factors $F_1$ and $F_2$ of $K_{4\ell}$ that satisfy the assumptions in Lemma \ref{lmm:k_{4ell}}. 

For $i\in\{1,2\}$, define the following parameters:
\begin{eqnarray*}
\varepsilon^{o}_{i}&=&\Big|\Big\{x_j: x_jy_{j+d}\in E(F_i), ~ j, d\in \ZZ_{2\ell}, ~j \equiv1\pmod{2},~ d\equiv0\pmod {2} \Big\}\Big|,\\
\varepsilon^{e}_{i}&=&\Big|\Big\{x_j: x_jy_{j+d}\in E(F_i), ~ j, d\in \ZZ_{2\ell},~j \equiv 0 \pmod {2},~ d\equiv 0\pmod {2} \Big\}\Big|,\\
\omega^{o}_{i}&=&\Big|\Big\{x_j: x_jy_{j+d}\in E(F_i), ~ j, d\in \ZZ_{2\ell}, ~j \equiv 1 \pmod {2},~ d\equiv 1\pmod {2} \Big\}\Big|,\\
\lambda_{i}&=&\Big|\Big\{x_jx_{j+d}\in E(F_i):j\in \ZZ_{2\ell},~ d\in L,~d\equiv 1\pmod {2}  \Big\}\Big|,\\
\rho_{i}&=&\Big|\Big\{ y_jy_{j+d}\in E(F_i): j\in \ZZ_{2\ell},~ d\in L, ~d\equiv 1\pmod {2}\Big\}\Big|.
\end{eqnarray*}
Observe that since $\ell \equiv 1\pmod{2}$, the total number of edges of odd left and odd right pure length in $K_{4\ell}$ is 
\begin{align*}\label{EQS 5}
\lambda_{1}+\lambda_{2}&=\frac{\ell+1}{2}~\mbox{and}\\
\rho_{1}+\rho_{2}&=\frac{\ell+1}{2}, ~\mbox{respectively}.\tag{5}
\end{align*}
Now, let $A$ be the set of vertices $x_j$ with $j$ odd. Observe that for each $i=1, 2$, we have $A=A^{'}_i\cup A^{''}_i$ where
\begin{eqnarray*}
A^{'}_i&=&\Big\{x_j: x_jy_{j+d}\in E(F_i), ~ j, d\in \ZZ_{2\ell},~ j \equiv 1\pmod {2} \Big\},\\
A^{''}_i&=&\Big\{x_j: x_jx_{j \pm d }\in E(F_i), ~ j\in \ZZ_{2\ell}, d\in L, ~ j \equiv 1 \pmod{2} \Big\}.
\end{eqnarray*}
Similarly, let $B$ be the set of vertices $y_j$ with $j $ even. Then for each $i=1, 2$, we find that $B=B^{'}_i\cup B^{''}_i$ where 
\begin{eqnarray*}
B^{'}_i&=&\Big\{y_j: x_jy_{j+d}\in E(F_i),~ j, d\in \ZZ_{2\ell}, ~j \equiv 0 \pmod {2} \Big\},\\
B^{''}_i&=&\Big\{y_j: y_jy_{j\pm d}\in E(F_i), ~j\in \ZZ_{2\ell}, d\in L, ~j \equiv 0 \pmod{2} \Big\}.
\end{eqnarray*}

Observe that $|A|=|A^{'}_i|+|A^{''}_i|$ and $|B|=|B^{'}_i|+|B^{''}_i|$. Furthermore, for each $i=1,2 $, 
\begin{eqnarray*}
|A^{'}_{i}|&=&\varepsilon^{o}_{i}+\omega^{o}_{i}\quad \mbox{and}\\ 
|B^{'}_{i}|&=&\varepsilon^{e}_{i}+\omega^{o}_{i}.
\end{eqnarray*}
Note that the number of vertices $x_j$ with $j$ odd that are covered by edges of even left pure length in $F_i$, for each $i=1,2$, is even. Hence
$$|A^{''}_{i}|\equiv \lambda_{i}\pmod{2}.$$

Similarly, the number of vertices $y_j$ with $j$ even that are covered by edges of even right pure length in $F_i$, for each $i=1,2$, is even. Thus
$$
|B^{''}_{i}|\equiv \rho_{i}\pmod{2}.
$$
Consequently, we have 
\begin{align*}\label{EQA 6}
|A^{'}_1|+|A^{''}_1|&\equiv \varepsilon^{o}_{1}+\omega^{o}_{1}+\lambda_{1} \equiv 1 \pmod{2},\\
|B^{'}_1|+|B^{''}_1|&\equiv\varepsilon^{e}_{1}+\omega^{o}_{1}+\rho_{1}\equiv 1 \pmod{2},\tag{6}\\
|A^{'}_2|+|A^{''}_2|&\equiv\varepsilon^{o}_{2}+\omega^{o}_{2}+\lambda_{2}\equiv 1 \pmod{2},~\mbox{and}\\
|B^{'}_2|+|B^{''}_2|&\equiv \varepsilon^{e}_{2}+\omega^{o}_{2}+\rho_{2}\equiv 1 \pmod{2}.
\end{align*}
Adding up the equations in \eqref{EQA 6} and using \eqref{EQS 5}, we conclude that 
$$ \varepsilon^{o}_{1}+\varepsilon^{e}_{1}+ \varepsilon^{o}_{2}+\varepsilon^{e}_{2}\equiv 0 \pmod{2}.$$
On the other hand, the number of even mixed differences in $F_1$ and $F_2$ is jointly
$$ \varepsilon^{o}_{1}+\varepsilon^{e}_{1}+ \varepsilon^{o}_{2}+\varepsilon^{e}_{2}=\ell \equiv 1 \pmod{2},$$
which is a contradiction.

\end{proof}
\section*{Acknowledgements}
This research was carried out during the first author's visit in the Department of Mathematics and Statistics, University of Ottawa, as a Visiting Student Researcher. She gratefully acknowledges that funding for her research stay was provided by the Iranian Ministry of Science, Research and Technology and also Professor Mateja \v{S}ajna from University of Ottawa. The second author gratefully acknowledges support by the Natural Sciences and Engineering Research Council of Canada (NSERC), Discovery Grant RGPIN-2016-04798.


\begin{thebibliography}{00} 

\bibitem{AB}P. Adams, D. Bryant, Resolvable directed cycle systems of all indices for cycle
length $3$ and $4$. Unpublished.

\bibitem{ad}P. Adams and D. Bryant, Two-factorisations of complete graphs of orders fifteen and seventeen, \textit{Australasian J. of Combinatorics}, \textbf{35} (2006), 113-118.

\bibitem{AH}B. Alspach, R. H\"{a}ggkvist, Some observations on the Oberwolfach problem,
\textit{J. Graph Theory,} \textbf{9 } (1985), 177-187.

\bibitem{ASSW}B. Alspach, P. J. Schellenberg, D. R. Stinson and D. Wagner, The Oberwolfach
problem and factors of uniform odd length cycles, \textit{J. Combin. Theory Ser.} A, \textbf{52} (1989), 20-43.

\bibitem{BGDR}J.-C. Bermond, A. Germa, and D. Sotteau, Resolvable decomposition of $ K_n^{*}$, \textit{J. Combin. Theory Ser.} A, \textbf{26} (1979), 179-185.

\bibitem{BZ}F. E. Bennett, X. Zhang, Resolvable Mendelsohn designs with block size $4$.
\textit{Aequationes Math.,} \textbf{40} (1990), 248-260.

\bibitem{BD}D. Bryant, P. Danziger, On bipartite 2-factorizations of $K_n- I$ and the Oberwolfach problem, \textit{J. Graph Theory}, \textbf{68} (2011), 22-37.
 

\bibitem{BFN}A. Burgess, N. Franceti\'{c}, and M. \v{S}ajna. On the directed Oberwolfach
Problem with equal cycle lengths: the odd case, \textit{Australas. J. Combin.} \textbf{71} (2018), 272-292.

\bibitem{BM}A. Burgess, M. \v{S}ajna, On the directed Oberwolfach Problem with equal
cycle lengths, \textit{Elec. J. Comb.} \textbf{21} (2014), 1-15.

\bibitem{DFW} A. Deza, F. Franek, W. Hua, M. Meszka, A. Rosa, Solutions to the Oberwolfach problem for orders 18 to 40, \textit{Journal of Combinatorial Mathematics and Combinatorial Computing,} \textbf{74} (2010), 95–102.

\bibitem{GJ} S. Glock, F. Joos, J. Kim, D. K\"{u}hn, and D. Osthus, Resolution of the Oberwolfach problem, \textit{ Acta Mathematica Universitatis Comenianae,} \textbf{88} (2019), 735-741.

\bibitem{HS}D.G. Hoffman, P. J. Schellenberg, The existence of $C_k$-factorizations of
$K_{2n} - F$, \textit{Discrete Math.} \textbf{97} (1991), 243-250.


\bibitem{RW}D. K. Ray-Chudhuri, M. Wilson, The Existence of Resolvable Block Designs, A Survey of Combinatorial Theory, North-Holland (1973), 361-375.

\bibitem{TT}T. Traetta, A complete solution to the two-table Oberwolfach problems, \textit{J. Combin. Theory Ser. }A, \textbf{120} (2013), 984-997.



\end{thebibliography}
\end{document}